\documentclass[11pt]{extarticle}

\usepackage[left=1in,right=1in,top=1in,bottom=1.3in]{geometry}
\usepackage{amsmath, amssymb, amsthm, latexsym, amsfonts, indentfirst, xcolor, mathtools, manfnt, microtype}
\usepackage[utf8]{inputenc}
\usepackage[english]{babel}

\numberwithin{equation}{section} 

\usepackage{indentfirst}
\usepackage{booktabs} 		
\usepackage{ dsfont }

\usepackage{amsthm}
\theoremstyle{plain}
\newtheorem{theorem}{Theorem}
\newtheorem{proposition}[theorem]{Proposition}
\newtheorem{lemma}[theorem]{Lemma}
\newtheorem{corollary}{Corollary}

\theoremstyle{definition}

\usepackage{enumitem}

\newcommand{\FF}{\mathcal{F}}

\newcommand{\m}{\mathcal}
\newcommand{\HH}{\mathcal{H}}
\usepackage{hyperref}
\newtheorem{conjecture}{Conjecture}

\newenvironment{proofw}{\par
  \pushQED{\qed}%
  \normalfont 
  \trivlist
  \item[]\ignorespaces
}{%
  \popQED\endtrivlist\
}

\title{Matchings in permutations}
\date{}
\author{Eduard Inozemtsev\thanks{Moscow Institute of Physics and Technology;
E-mail: \url{eduard_inozemtsev@bk.ru}}, Dmitrii Kolupaev\thanks{Moscow Institute of Physics and Technology, Technion; E-mail: \url{dmitriik@campus.technion.ac.il}}, Andrey Kupavskii\thanks{Moscow Institute of Physics and Technology, Saint-Petersburg State University, Innopolis University;
E-mail: \url{kupavskii@ya.ru}}}

\begin{document}
\maketitle
\begin{abstract}
    We say that two permutations $[n]\to [n]$ intersect if they map some element $x$ to the same element $y$. A matching in a family of permutations is a collection of pairwise disjoint permutations. In this paper, we study families of permutations with no matchings of size $s$. In particular, we obtain a characterization of the largest $s$-matching-free families and a Hilton--Milner type result. We also obtain results for the families of derangements.
\end{abstract}

\section{Introduction}

We denote by $[n]:=\{1,\dots,n\}$ the standard $n$-element set and by
${[n]\choose k}:=\{F\subset [n]: |F|=k\}$ the set of its $k$-element subsets.
A \emph{family} is a collection of sets. For a family $\FF$, let $\nu(\FF)$
denote its \emph{matching number}, that is, the maximum number of pairwise
disjoint sets in $\FF$. If $\nu(\FF)=1$, then any two sets in $\FF$ intersect;
in this case we say that $\FF$ is \emph{intersecting}.

One of the classical results in extremal combinatorics is the
Erd\H{o}s--Ko--Rado theorem~\cite{EKR}, which states that, for $n\geq 2k$, the
largest intersecting family in ${[n]\choose k}$ has size at most
${n-1\choose k-1}$. For $n>2k$, equality holds only if $\FF$ is of the form
$\mathcal S_x:=\{F\in {[n]\choose k}: x\in F\}$ for some $x\in[n]$. Such
families are called \emph{stars} or \emph{trivial}, and $x$ is called their
\emph{center}. If an intersecting family is not pierced by a single element,
then we call it \emph{non-trivial}.

The \emph{covering number} $\tau(\FF)$ of $\FF$ is the size of the smallest set
$X$ such that $X\cap F\ne\emptyset$ for all $F\in\FF$. The family
$\mathcal S_x$ has covering number $1$. Hilton and Milner~\cite{HM} gave a
tight upper bound on the size of an intersecting family in ${[n]\choose k}$
with covering number at least $2$: for $n>2k$,
$|\FF|\leq {n-1\choose k-1}-{n-k-1\choose k-1}+1$. Note that the condition
$\tau(\FF)\geq 2$ is equivalent to $\FF$ being non-trivial. Results
of this type are called {\it stability} results. For a given extremal problem, they
provide a trade-off between the distance from the extremal examples and the
largest possible size of the family.

In 1968, Erd\H{o}s~\cite{E} proposed a conjecture that generalizes the
Erd\H{o}s--Ko--Rado theorem. It determines the largest size of a family
$\FF\subset {[n]\choose k}$ with $\nu(\FF)<s$. There are two natural candidate
extremal families:
\[
    \mathcal A_1:=\{F\in {[n]\choose k}: [s-1]\cap F\ne\emptyset\},
    \qquad
    \mathcal A_k:={[sk-1]\choose k}.
\]
The conjecture of Erd\H{o}s states the following.

\begin{conjecture}[Erd\H{o}s Matching Conjecture~\cite{EMC}]
  If $n\geq sk$ and a family $\FF\subset {[n]\choose k}$ satisfies
  $\nu(\FF)<s$, then $|\FF|\leq \max\{|\mathcal A_1|,|\mathcal A_k|\}$.
\end{conjecture}

Erd\H{o}s proved the conjecture for $n>n_0(s,k)$. By now, the conjecture has
been confirmed in several important ranges, both when $n$ is very close to
$sk$, where the second family is extremal, and when $n>Csk$, where the first
family is extremal. See~\cite{FK2,KoKu} for the most recent results.

We call the range of parameters in which $\mathcal A_1$ is extremal the
\emph{trivial regime}. In analogy with the Hilton--Milner theorem, one may
expect that a Hilton--Milner type stability result should hold for the
Erd\H{o}s Matching Conjecture in the trivial regime. This is indeed the case,
as was shown by Frankl and Kupavskii~\cite{FK6}. Let us first define the
relevant family:
\[
    \mathcal H
    =
    \{H\in {[n]\choose k}: H\cap [s-2]\ne\emptyset\}
    \cup \{[s,s+k-1]\}
    \cup
    \{H\in {[s-1,n]\choose k}: s-1\in H,\ H\cap [s,s+k-1]\ne\emptyset\}.
\]
The family $\mathcal H$ is the union of $s-2$ stars with centers in $[s-2]$ and
the Hilton--Milner family on $[s-1,n]$. Clearly, $\nu(\mathcal H)<s$ and
$\tau(\mathcal H)=s$ for $n\geq sk$.

\begin{theorem}[Frankl and Kupavskii~\cite{FK6}]
\label{t: Hilton-Milner for EMC}
    Let $n\geq 2sk(1+o(1))$, where $o(1)$ is taken with respect to
    $s\to\infty$. Assume that $\mathcal G\subset {[n]\choose k}$ satisfies
    $\nu(\mathcal G)<s$ and $\tau(\mathcal G)\geq s$. Then
    $|\mathcal G|\leq |\mathcal H|=|\mathcal A_1|-{n-k-s+1\choose k-1}+1$.
\end{theorem}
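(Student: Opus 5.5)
We argue by induction on $s$. The base case $s=2$ is the Hilton--Milner theorem, since $\nu(\mathcal G)<2$ means $\mathcal G$ is intersecting and $\tau(\mathcal G)\ge 2$ means it is non-trivial; in this case $|\mathcal H|={n-1\choose k-1}-{n-k-1\choose k-1}+1$, as required. For $s\ge 3$ we first record the identity $|\mathcal H|=|\mathcal A_1|-{n-k-s+1\choose k-1}+1$. The sets of $\mathcal A_1$ missing from $\mathcal H$ are exactly those that contain $s-1$, avoid $[s-2]$, and avoid $[s,s+k-1]$. There are ${n-(s-2)-1-k\choose k-1}$ of them, and $\mathcal H$ contributes the one extra set $[s,s+k-1]$.

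For the inductive step, let $\mathcal G$ have $\nu(\mathcal G)<s$ and $\tau(\mathcal G)\ge s$. We split according to the maximum degree.

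\textbf{Case 1: some element $x$ has large degree.} Suppose $|\mathcal G(x)|>D:=s k{n-2\choose k-2}$. Put $\mathcal G'=\mathcal G(\bar x)$, the sets avoiding $x$, viewed on $[n]\setminus\{x\}$. We claim $\nu(\mathcal G')<s-1$. Indeed, a matching $F_1,\dots,F_{s-1}$ in $\mathcal G'$ covers at most $(s-1)k$ elements. The number of sets through $x$ that also meet this union is at most $(s-1)k{n-2\choose k-2}<D$, so some set through $x$ is disjoint from all $F_i$ and extends the matching to size $s$, a contradiction. Also $\tau(\mathcal G')\ge s-1$, since a cover $X$ of $\mathcal G'$ gives the cover $X\cup\{x\}$ of $\mathcal G$.

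So, provided $n-1$ still satisfies the hypothesis for $s-1$, induction gives $|\mathcal G'|\le |\mathcal H_{s-1}(n-1)|$. Trivially $|\mathcal G(x)|\le{n-1\choose k-1}$. Adding these bounds yields exactly the stated bound, because $\mathcal H_s(n)$ is a full star on element $1$ together with a copy of $\mathcal H_{s-1}$ on $[2,n]$, and the binomial terms match. When $|\mathcal G'|$ is smaller than this, we just use the sum bound.

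The range condition needs care. We need $n-1\ge 2(s-1)k(1+o(1))$, which follows from $n\ge 2sk(1+o(1))$ since decreasing $s$ by one loses $2k$ while $n$ loses only $1$.

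\textbf{Case 2: all degrees are at most $D$.} Take a maximal matching $F_1,\dots,F_{\nu}$ with $\nu\le s-1$. Its union $U$ has size at most $(s-1)k$ and meets every set of $\mathcal G$. Hence
\[
|\mathcal G|\le (s-1)k\,D\approx s^2k^2{n-2\choose k-2}.
\]
We want this to be below $|\mathcal H|\approx (s-1){n-1\choose k-1}\bigl(1-O(\tfrac{k}{n}\cdot\text{stuff})\bigr)$. The comparison reads roughly $s^2k^2\cdot\frac{k}{n}$ against $s$, so it needs $n\gtrsim sk^3$, which is too weak for the claimed range $n\ge 2sk(1+o(1))$.

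This is the main obstacle. To reach the linear range, I would replace the crude degree argument with a concentration/averaging argument in the spirit of Frankl's proof of the Erd\H{o}s Matching Conjecture for $n\ge(2s+1)k$. One takes a uniformly random partition of a $(s k+\ell)$-subset into $s$ blocks of size $k$ plus leftovers and counts how many blocks can lie in $\mathcal G$, using $\nu<s$. This gives $|\mathcal G|\le(s-1){n-1\choose k-1}$-type bounds with error terms. One then needs the stability version: if no element has degree close to ${n-1\choose k-1}$, then the partition-counting loses a constant fraction, which beats the small deficit ${n-k-s+1\choose k-1}$ once $n\ge 2sk(1+o(1))$. Setting the degree threshold in Case 1 to about ${n-1\choose k-1}-{n-k-s+1\choose k-1}$ rather than $D$ requires redoing the extension step with a fractional/greedy argument. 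I expect this calibration, making the two cases meet at the threshold $2sk(1+o(1))$, to be the genuinely hard part.
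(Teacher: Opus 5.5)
The paper does not prove this statement; it only quotes it from Frankl and Kupavskii. Your proposal therefore has to stand on its own, and as written it proves the theorem only for $n\ge Csk^3$, not in the claimed linear range. The steps you carry out are fine: the identity for $|\mathcal H|$, the Hilton--Milner base case, the peeling step ($\nu(\mathcal G(\bar x))<s-1$ and $\tau(\mathcal G(\bar x))\ge s-1$), the decomposition $|\mathcal H_s(n)|=\binom{n-1}{k-1}+|\mathcal H_{s-1}(n-1)|$, and the range bookkeeping (indeed $\frac{n-1}{(s-1)k}\ge\frac{n}{sk}$). The trouble is your threshold $D=sk\binom{n-2}{k-2}=\frac{sk(k-1)}{n-1}\binom{n-1}{k-1}$. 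It is at least $\binom{n-1}{k-1}$, the largest possible degree, whenever $n\le sk(k-1)+1$, and for $k\ge 4$ that range contains $n\approx 2sk$. So in the regime the theorem is about, Case~1 never occurs, the induction does no work, and the entire statement sits in Case~2, which you leave open. Your union bound $|\mathcal G|\le (s-1)k\,D$ closes the argument only for $n\ge Csk^3$, roughly the classical Bollob\'as--Daykin--Erd\H{o}s range.

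Lowering the Case~1 threshold to $\binom{n-1}{k-1}-\binom{n-1-(s-1)k}{k-1}$ is all the extension step actually needs, but it only relocates the difficulty. Case~2 then asks you to show that every $\mathcal G$ with $\nu(\mathcal G)<s$, $\tau(\mathcal G)\ge s$ and all degrees below that value has at most $|\mathcal H|$ members.

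Your sketch for this is calibrated against the wrong baseline. You start from a bound of type $(s-1)\binom{n-1}{k-1}$ and hope to gain more than the deficit $\binom{n-k-s+1}{k-1}$, which is less than one star. But when $n=\Theta(sk)$, $(s-1)\binom{n-1}{k-1}$ exceeds $|\mathcal A_1|$ by $\Theta(s)$ full stars, because the overlaps $\binom{s-1}{2}\binom{n-2}{k-2}$ are no longer negligible. For instance, at $n=2sk$ one has $|\mathcal A_1|\approx(1-e^{-1/2})\binom{n}{k}$, while $(s-1)\binom{n-1}{k-1}\approx\frac12\binom{n}{k}$.

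Hence any argument for Case~2 must be sharp at $|\mathcal A_1|$: it must contain a proof of the Erd\H{o}s Matching Conjecture itself for $n$ around $2sk$ (a nontrivial theorem of Frankl). It must then upgrade that to a stability statement gaining $\binom{n-k-s+1}{k-1}-1$ for families with $\tau\ge s$. That sharp matching-bound-plus-stability argument is the actual content of the Frankl--Kupavskii theorem, and it is missing from your proposal.
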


EKR- and EMC-type problems have also been studied for families other than
subsets of $[n]$: permutations, graphs~\cite{EFF}, partitions~\cite{MeMo,Kup54},
simplicial complexes~\cite{Bor3,Kup55}, and vector spaces~\cite{FG,IK}.

This paper is devoted to EMC-type results for permutations. We denote by
$\Sigma_n$ the family of all permutations $[n]\to[n]$. We say that two
permutations $\sigma,\pi\in\Sigma_n$ \emph{intersect} if there exists
$i\in[n]$ such that $\sigma(i)=\pi(i)$. It is convenient to identify a
permutation $\sigma\in\Sigma_n$ with the $n$-element subset of $[n]^2$ given by
its graph, that is, with the set $\{(i,\sigma(i)): i\in[n]\}$. In this way,
families of permutations can be viewed as subfamilies of ${[n]^2\choose n}$,
and intersection of permutations becomes ordinary intersection of sets.

We introduce the notation used throughout the paper. Let
$\sigma\in\Sigma_n$. We denote by $\Sigma_{n,\sigma}$ and
$\Sigma_{n,\overline{\sigma}}$ the families of permutations that intersect
$\sigma$ and do not intersect $\sigma$, respectively. If $\sigma$ is the
identity permutation, then $\m D_n:=\Sigma_{n,\overline{\sigma}}$ is the family
of derangements. Similarly, for a permutation $\sigma$, we denote by
$\m D_{n,\overline{\sigma}}$ the family of derangements that do not intersect
$\sigma$.

We also use the following standard notation:
\[
    \FF(X):=\{F\setminus X: X\subset F,\ F\in\FF\},\qquad
    \FF[X]:=\{F: X\subset F,\ F\in\FF\},\qquad
    \FF[\m S]:=\bigcup_{A\in\m S}\FF[A].
\]
We use this notation for families of permutations, thinking of permutations as
sets of pairs. For a family $\FF$ of permutations and a pair $(x,y)$, we write
$\FF[(x,y)]$ instead of $\FF[\{(x,y)\}]$; thus $\FF[(x,y)]$ consists of the
permutations in $\FF$ that map $x$ to $y$. Finally, we denote
$d_n:=|\m D_n|$ and $d_{n,1}:=|\m D_n[(x,y)]|$, where $x\neq y$.

Deza and Frankl~\cite{DeF} initiated the study of intersection problems for
permutations. In particular, they showed that if $\FF\subset \Sigma_n$ is
intersecting, then $|\FF|\leq (n-1)!$. Their proof easily generalizes to
families with bounded matching number.

\begin{proposition}
    If $\FF\subset \Sigma_n$ and $\nu(\FF)<s$, then
    $|\FF|\leq (s-1)(n-1)!$.
\end{proposition}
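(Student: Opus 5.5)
The plan is to run the Deza--Frankl partition argument: split $\Sigma_n$ into $(n-1)!$ blocks of $n$ pairwise disjoint permutations, and observe that $\FF$ can meet each block in at most $s-1$ elements. Concretely, let $C$ be the cyclic group generated by the cycle $c=(1\,2\,\dots\,n)$, viewed as permutations of the values $[n]$. For distinct $a,b\in C$ and any $\pi\in\Sigma_n$, the permutations $a\circ\pi$ and $b\circ\pi$ are disjoint. Indeed, $a(\pi(i))=b(\pi(i))$ would require $b^{-1}a$ to have a fixed point. But every nonidentity power $c^j$ with $0<j<n$ is fixed-point-free.

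Next, consider the left cosets $\{a\circ\pi: a\in C\}$ of $C$ in $\Sigma_n$. They partition $\Sigma_n$ into $n!/n=(n-1)!$ classes. By the previous paragraph, each class is a matching of size $n$ in $\Sigma_n$.

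Now let $\FF\subset\Sigma_n$ satisfy $\nu(\FF)<s$. If some coset contained $s$ members of $\FF$, those members would be pairwise disjoint, giving a matching of size $s$ in $\FF$, which is a contradiction. Hence $|\FF\cap K|\le s-1$ for every coset $K$. Summing over all $(n-1)!$ cosets gives $|\FF|\le (s-1)(n-1)!$.

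There is no real obstacle here. The only point needing care is that nonidentity rotations have no fixed points, which is what makes each coset a family of pairwise disjoint permutations. If $s>n$, the bound still holds, since it is then weaker than the trivial bound $|\FF|\le n!$.
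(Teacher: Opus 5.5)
Your proof is correct and is essentially the paper's Deza--Frankl argument: partition $\Sigma_n$ into cosets of the cyclic group generated by an $n$-cycle, note that each coset is a matching, and conclude that each coset contains at most $s-1$ members of $\FF$. The only difference is that you use the cosets $C\pi$ (rotating values) rather than the paper's $\sigma G$ (rotating positions). This is immaterial, since in both cases two distinct members of a coset differ by a fixed-point-free rotation.
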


\begin{proof}
    Let $\pi$ be an $n$-cycle and let $G$ be the cyclic group generated by
    $\pi$. The family $\Sigma_n$ is partitioned into $(n-1)!$ left cosets
    $\sigma G$. Any two distinct permutations in the same coset are disjoint.
    Hence each coset contains at most $s-1$ permutations from $\FF$, and the
    result follows.
\end{proof}

This proof is simpler than the proof of the Erd\H{o}s--Ko--Rado theorem.
However, advancing further proved difficult. In particular, the proof above
does not characterize the equality case $|\FF|=(s-1)(n-1)!$. Deza and Frankl
conjectured that, for $s=2$, equality $|\FF|=(n-1)!$ holds if and only if
$\FF$ is a star, that is, if it consists of all permutations containing some
fixed pair $(x,y)$. This was proved independently by Larose and
Malvenuto~\cite{LM} and by Cameron and Ku~\cite{CK}.

Let $\sigma\in\Sigma_n$ be a permutation such that $\sigma(1)\neq 1$, and
consider the family $\m{HM}:=\Sigma_{n,\sigma}[(1,1)]\cup\{\sigma\}$. This is
an intersecting family of permutations; more precisely, it is the permutation
analogue of the family that gives equality in the Hilton--Milner theorem for
$k$-uniform sets. Larose and Malvenuto~\cite{LM} conjectured that any largest
non-trivial intersecting family of permutations must be isomorphic\footnote{By \emph{isomorphic to a family $\mathcal G$} we mean
a family of the form $\pi\mathcal G\rho$, where $\pi,\rho\in\Sigma_n$.} to $\m{HM}$. This
was proved for large $n$ by Ellis~\cite{Ell}. We note that the results
mentioned in the last two paragraphs were proved using algebraic methods such
as Hoffman's spectral bound and representation theory of symmetric groups.

Recently, Kupavskii and Zakharov~\cite{KuZa} introduced the method of spread
approximations, which, in particular, gives a combinatorial way to prove
EKR-type results for permutations. It also opens a route to more general
extremal and stability results. Let us mention some recent applications of the
method to permutations. Wang and Xiao~\cite{WX} proved a maximum diversity
result for intersecting families of permutations. Inozemtsev and
Kupavskii~\cite{InKu} proved a Frankl-type degree result for permutations,
which is a far-reaching generalization of Hilton--Milner type results.
Kupavskii and Noskov~\cite{KN} obtained results concerning sunflower-free
families in a general spread setting.

In this paper, we prove uniqueness and stability results for the analogue of
the Erd\H{o}s Matching Conjecture for permutations. Our first result is a
uniqueness theorem for all permutations.

\begin{theorem} \label{emc}
    Let $n,s$ be integers with $s\geq 2$ and
    $s\leq \frac{n}{2^{17}\log n}$. Let $\FF\subset \Sigma_n$ be a family of
    permutations such that $\nu(\FF)<s$. Then
    $|\FF|\leq (s-1)(n-1)!$. Moreover, in the case of equality, $\FF$ must be
    of the form $\bigcup_{i=1}^{s-1}\Sigma_n[(x,y_i)]$ or
    $\bigcup_{i=1}^{s-1}\Sigma_n[(x_i,y)]$, where the $y_i$'s, respectively the
    $x_i$'s, are distinct.
\end{theorem}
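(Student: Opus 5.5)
The upper bound $|\FF|\le (s-1)(n-1)!$ is the Proposition above, so only the equality case needs work. I plan to use the spread approximation method of Kupavskii--Zakharov in two stages: first show that an extremal $\FF$ is essentially contained in a union of $s-1$ stars $\Sigma_n[(x_i,y_i)]$, and then show by exact counting that these stars must share a coordinate.

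\textbf{Spread approximation.} Treat permutations as $n$-subsets of $[n]^2$. The family $\Sigma_n$ is $r$-spread with $r\approx n/e$ or so: for any partial matching $X$ of size $t$, $|\Sigma_n[X]|\le (n-t)!\le (C/n)^t\cdot n!$ up to constants. I would run the standard peeling procedure. Repeatedly take a maximal set $S$ with $|S|\le q$ (with $q\sim\log n$) such that $\FF(S)$ is $R$-spread for a suitably chosen $R$, remove $\FF[S]$, and add $S$ to the family $\m S$. This stops with a remainder of size at most $(C/n)^{q}n!$, which is negligible compared with $(n-1)!$.

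The key lemma is that $\nu(\m S)<s$. Suppose instead that $S_1,\dots,S_s\in\m S$ are pairwise disjoint and compatible. By spreadness, together with the condition $s\le n/(2^{17}\log n)$, we can find disjoint $F_i\in\FF(S_i)$ that also avoid $\bigcup S_j$ via a random disjoint-choice argument. This would give an $s$-matching in $\FF$, a contradiction. The hypothesis on $s$ is precisely what makes the union bound $s\cdot q\cdot (\text{spread}^{-1})$ small.

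\textbf{Reducing to singletons.} Sets $S\in\m S$ of size $\ge 2$ contribute at most $|\m S^{(\ge2)}|\,(n-2)!$. Next, $\m S^{(1)}$ is a family of pairs, in other words edges of a bipartite graph on $[n]\times[n]$, with no matching of size $s$. By König's theorem these edges are covered by at most $s-1$ vertices, namely rows $x_i$ and columns $y_j$. Each covering vertex gives a star of size $(n-1)!$.

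A comparison argument then shows the following. If the cover contains both a row vertex and a column vertex, then the intersection $\Sigma_n[(x,\cdot)]\cap\Sigma_n[(\cdot,y)]$ loses at least $(n-1)!-(n-2)!$ on overlaps. Similarly, if fewer than $s-1$ full stars are present, or if the higher-order sets are nontrivial, one loses $\Omega((n-1)!)$. In each case $|\FF|<(s-1)(n-1)!$ unless $\FF$ is, up to $O(s^2 (n-2)!)$ elements, the union of $s-1$ stars all centered at the same row $x$ (or all at the same column $y$). The distinct targets $y_i$ are forced, since otherwise the union of stars is smaller.

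\textbf{Exact structure.} Now suppose $\FF\supset$ most of $\bigcup_i\Sigma_n[(x,y_i)]$ and $|\FF|=(s-1)(n-1)!$. I claim $\FF$ contains no $\sigma$ with $\sigma(x)\notin\{y_1,\dots,y_{s-1}\}$. Given such a $\sigma$, I would pick $\pi_i\in\FF[(x,y_i)]$ that are pairwise disjoint and disjoint from $\sigma$. Here $\FF[(x,y_i)]$ misses only a tiny fraction of $\Sigma_n[(x,y_i)]$, and the quotient is again spread, so a greedy or random choice succeeds. This yields an $s$-matching, a contradiction. Hence $\FF\subseteq\bigcup_i\Sigma_n[(x,y_i)]$, and equality of sizes forces $\FF$ to be the whole union.

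The main obstacle is the middle step: making the stability precise enough that the cover consists of exactly $s-1$ stars sharing a row or a column. This includes ruling out mixed row/column covers and configurations in which some star is only partially present. I would handle it by a direct count of the overlaps $\Sigma_n[(x,y)]\cap\Sigma_n[(x',y')]$, each of size at most $(n-2)!$, against the $(s-1)(n-1)!$ target.
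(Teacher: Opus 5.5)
Your outline matches the paper's: spread approximation, then showing that $\m S$ consists of at most $s-1$ singletons, then showing the corresponding stars are disjoint, then excluding any $\sigma$ outside them. The middle step, however, does not work as written.

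\textbf{Non-singleton members of $\m S$.} The estimate $|\m S^{(\ge2)}|\,(n-2)!$ is useless because nothing bounds the number of non-singleton members of $\m S$. For instance, the $(n-1)^2$ sets $\{(1,1),(a,b)\}$ with $a,b\neq 1$ form a family with $\nu=1$, and your estimate gives $(n-1)\cdot(n-1)!$. What is missing is a structural use of $\nu(\m S)<s$. After shrinking $\m S$ to a maximal family, a non-trivial one has $l\le s-2$ singletons. By Theorem~\ref{nontrivialsimple} (which needs $\varepsilon r\ge 8e(s-1)q$), its non-singleton members add at most $\varepsilon(s-1-l)$ times the size of a star. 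Via Corollary~\ref{corsimple} this gives $|\Sigma_n[\m S]|\le (s-\frac32)(n-1)!$, and together with $|\FF'|\le n!/n^4$ this rules out equality.

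\textbf{The K\"onig step.} This step confuses two notions of matching. Members of $\m S$ are subsets of $[n]^2$, so distinct singletons $\{(x,y)\}$ and $\{(x,y')\}$ are disjoint. Thus $\nu(\m S)<s$ simply says there are at most $s-1$ singleton pairs $(x_i,y_i)$, each giving the star $\Sigma_n[(x_i,y_i)]$. A row or column vertex does not give a star of size $(n-1)!$. In the extremal configuration $\{(x,y_1)\},\dots,\{(x,y_{s-1})\}$ your cover is the single vertex $x$, yet these stars cover $(s-1)(n-1)!$ permutations.

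\textbf{Overlaps.} Two stars with $x_i\neq x_j$ and $y_i\neq y_j$ overlap in exactly $(n-2)!$ permutations; the loss is not $(n-1)!-(n-2)!$. The point you need is that $(n-2)!>n!/n^4\ge|\FF'|$. Hence a \emph{single} overlap, or a missing star, already contradicts equality, and pairwise disjointness then forces a common row or column. Your slack of $O(s^2(n-2)!)$ would not suffice for your last step: for $s>\sqrt n$ it exceeds the size of one star, so it does not show that each $\FF[(x,y_i)]$ misses only a tiny fraction.

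\textbf{Building disjoint members.} This arises both in proving $\nu(\m S)<s$ and in your last step. The union bound over $\bigcup_j S_j$ is the easy part. The members that must be pairwise disjoint have size about $n$, while the spreadness is only $\Theta(n)$. So a union bound or greedy choice over the roughly $n$ pairs of an already chosen permutation gives a vacuous bound of order $n/R=\Theta(1)$. The paper instead uses the spread lemma (Theorem~\ref{spreadtheorem}) together with Lemma~\ref{lemkee}; this is where the requirement $r>2^9 s\log_2(2k)$ enters.

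\textbf{Your last step.} With that tool, your last step is a valid and more direct alternative to the paper's. Once the $s-1$ stars are disjoint and $|\FF|=(s-1)(n-1)!$, each $\FF\cap\Sigma_{n,\overline{\sigma}}[(x,y_i)]$ has size at least $d_{n,1}-n!/n^4$. Hence it is $\Omega(n)$-spread after removing $(x,y_i)$, and one obtains a cross $(s-1)$-matching disjoint from $\sigma$, as in Case~1 of Lemma~\ref{cross-matching}. The paper instead bounds the members of $\FF\setminus\FF'$ disjoint from $\sigma$ via Lemma~\ref{lemc}, that is, by a second spread approximation inside $\Sigma_{n,\overline{\sigma}}\cong\m D_n$.
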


In fact, our approach applies to more general classes of permutations. In
particular, we obtain the following analogous result for derangements.

\begin{theorem} \label{emcder}
    Let $n,s$ be integers satisfying $s\leq \frac{n}{2^{17}\log n}$. Let $\FF\subset \m D_n$ be a family of
    derangements such that $\nu(\FF)<s$. Then $|\FF|\leq (s-1)d_{n,1}$.
    Moreover, in the case of equality, $\FF$ must be of the form
    $\bigcup_{i=1}^{s-1}\m D_n[(x,y_i)]$ or
    $\bigcup_{i=1}^{s-1}\m D_n[(x_i,y)]$, where the $y_i$'s, respectively the
    $x_i$'s, are distinct and all pairs are off-diagonal.
\end{theorem}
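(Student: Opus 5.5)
The proof has two parts. First we show that any extremal family is well approximated by a small collection of pairs, using the spread approximation method of Kupavskii and Zakharov applied to $\m D_n$. Then we run a stability and peeling argument that forces the exact structure.

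\textbf{Upper bound.} We need an analogue of the Deza--Frankl coset argument inside $\m D_n$. One option is a double count: for a random $n$-cycle $\pi$ and a random $\sigma$, the coset $\sigma\langle\pi\rangle$ consists of $n$ pairwise disjoint permutations, and the number of them that are derangements has a controlled distribution. A cleaner route is to let the spread approximation produce the bound directly, with $|\FF|\le (s-1)d_{n,1}$ appearing at the end of the stability analysis.

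\textbf{Spread approximation.} We view $\m D_n$ as a subfamily of ${[n]^2\choose n}$. We first check that $\m D_n$ is $r$-spread for $r\approx n/e'$ for some constant: for any set $X$ of $t$ pairs forming a partial injection, $|\m D_n[X]|\le (C/n)^t d_n$. This follows from the standard estimates $d_{n-t,\cdot}\approx (n-t)!/e$. We then apply the spread approximation theorem. It yields a family $\m S$ of partial permutations with sizes at most $q=O(\log n)$, with $\nu(\m S)<s$ in the appropriate sense, and a remainder $\FF\setminus \m D_n[\m S]$ of size at most $(C/n)^{q}d_n$, which is negligible. The spreadness constant $2^{17}$ and the condition $s\le n/(2^{17}\log n)$ enter here: $s$ disjoint derangements must be extendable from any $s$ disjoint elements of $\m S$, and that needs $sq\ll n$.

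\textbf{Reducing $\m S$ to singletons.} Sets in $\m S$ of size $\ge 2$ contribute at most $|\m S^{(\ge2)}|\cdot O(d_n/n^2)$. We bound $|\m S|$ by a sunflower or degree argument together with $\nu(\m S)<s$; the kernels form a family with small covering number, so only $O(s\,\mathrm{poly}\log n)$ relevant pieces remain. The singletons of $\m S$ are off-diagonal pairs $(x,y)$ and have matching number less than $s$ as a family of pairs under disjointness, so they can be covered by $s-1$ rows and columns. A weighted count now gives the bound. A row $x$ carries at most $n-1$ off-diagonal pairs, but only pairs with distinct images can be simultaneously present without creating a matching. We compare with $(s-1)d_{n,1}$, using the fact that $\m D_n[(x,y)]$ and $\m D_n[(x,y')]$ are disjoint while $\m D_n[(x,y)]$ and $\m D_n[(x',y')]$ overlap in about $d_n/n^2$. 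This shows that unless $\m S$ consists of exactly $s-1$ singleton pairs sharing a row or sharing a column, $|\FF|\le (s-1)d_{n,1}-\Omega(d_n/n^2)$. The case of $s-1$ pairs in "general position" has overlap losses of order $s^2 d_n/n^2$. It must be compared against the gain from allowing extra structure, and the constraint $\nu<s$ forbids further sets.

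\textbf{Exact structure (main obstacle).} Here $\m S=\{(x,y_1),\dots,(x,y_{s-1})\}$ (or the column version), and we must show $\FF\subset\bigcup_i\m D_n[(x,y_i)]$. Suppose some $\sigma\in\FF$ avoids all these pairs. Then $\sigma(x)\notin\{y_i\}$, and we count how many $F\in\bigcup\m D_n[(x,y_i)]$ must be removed. For each choice of $s-1$ disjoint derangements $F_i\ni(x,y_i)$ that are all disjoint from $\sigma$, the family would contain an $s$-matching. Choosing greedily, we show that the members of $\m D_n[(x,y_i)]$ disjoint from $\sigma$ form a positive fraction, about $1/e$, of each $\m D_n[(x,y_i)]$. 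We also show that these pieces are robustly matchable, via a random greedy argument or a Hall-type argument using spreadness. It follows that $\FF$ misses $\Omega(d_{n,1})$ of them, far more than the at most $O(d_n/n^2)$ extra sets gained, which is a contradiction. The delicate point is that the missing sets must be counted with multiplicity correctly: we need that any family containing $\sigma$ loses at least $c\,d_{n,1}$ members. The route we would try first is a Hilton--Milner-type bound inside $\m D_n[(x,y_i)]$ restricted to permutations intersecting $\sigma$, applied iteratively over $i$.
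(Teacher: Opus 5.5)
Your architecture matches the paper's: spread approximation inside $\m D_n$, then forcing $\m S$ to be $s-1$ singletons with pairwise disjoint stars, then excluding outliers. The upper bound does come out of the stability analysis, as you suggest. However, the step that rules out a non-trivial $\m S$ has a genuine gap. What is needed is that if $\m S$ has $l\le s-2$ singletons, the remaining members add at most $\varepsilon(s-1-l)d_{n,1}$ for a small constant $\varepsilon$. Then $|\m D_n[\m S]|\le (s-1-c)d_{n,1}$, and the remainder of order $n^{-4}d_n$ cannot compensate. The paper imports exactly this from Frankl--Kupavskii (Theorem~\ref{nontrivialsimple}, Corollary~\ref{corsimple}). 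Its proof passes to a \emph{maximal} $\m S$, in which no member can be shrunk without creating an $s$-matching, and uses maximality to control the non-singleton members in terms of $s-1-l$. Your sunflower/covering sketch uses neither maximality nor any dependence on $s-1-l$, and its quantitative claim is false. Take $2s-1$ off-diagonal pairs forming a partial permutation and let $\m S$ be all $2$-subsets. This is a maximal family with $\nu(\m S)<s$ and $\Theta(s^2)$ members, each of weight $\Theta(d_n/n^2)$, so there is no $O(s\,\mathrm{poly}\log n)$ bound on the relevant pieces. Even granting your count, for $l=s-2$ you would need $s\,\mathrm{poly}\log n\cdot d_n/n^2<d_{n,1}=d_n/(n-1)$. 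That fails for $s$ of order $n/\log n$ once the polylogarithmic factor exceeds a small multiple of $\log n$. The covering bound $\tau(\m S)\le (s-1)q$ only yields $(s-1)q$ stars. Separately, distinct singletons of $\m S$ are disjoint subsets of $[n]^2$, so $\nu(\m S)<s$ simply allows at most $s-1$ of them. Your row/column covering discussion conflates disjointness in $\m S$ with conflicts between pairs.

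The outlier step also leaves the core unproved: ``robustly matchable'' is the whole difficulty. For fixed $F_1,\dots,F_j$, the members of the next star $\m D_{n,\overline{\sigma}}[(x,y_{j+1})]$ avoiding $F_1,\dots,F_j$ form only about an $e^{-j}$-fraction of it. An arbitrary half of that star can therefore miss all of them, so a naive greedy or Hall-type choice can get stuck long before $s\approx n/\log n$ steps. What you need is a cross-matching statement. If the families $\FF\cap\m D_{n,\overline{\sigma}}[(x,y_i)]$, $i\in[s-1]$, have no cross $(s-1)$-matching, then one of them contains at most half of its star, and that star has size at least $(n-1)!/7.5$. Proving this requires three ingredients:
\begin{enumerate}[label=(\roman*)]
\item the $(r,q)$-spreadness of double derangements, which the paper derives from the Egorychev--Falikman bound in Lemmas~\ref{permanent-min-degree} and~\ref{ddspread};
\item the spread lemma;
\item the Keevash--Lifshitz--Long--Minzer lemma.
\end{enumerate}
These are combined exactly as in the proof that $\nu(\m S)<s$ in Theorem~\ref{spreadapproxtheorem}. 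The paper packages this as Lemma~\ref{lemc}, applied to the members of $\FF$ disjoint from $\sigma$, which have matching number less than $s-1$. Finally, your proposed per-star Hilton--Milner bound ``applied iteratively over $i$'' has the wrong shape. In $\{\sigma\}\cup(\m D_n[(x,y_1)]\cap\Sigma_{n,\sigma})\cup\bigcup_{i\ge 2}\m D_n[(x,y_i)]$ only one star loses anything, so the loss must be forced by a cross-matching argument rather than star by star.
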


We also prove a Hilton--Milner type stability result for families of
permutations, in parallel with Theorem~\ref{t: Hilton-Milner for EMC} for sets. Again, this could be extended to more general classes of
permutations, but we omit this for clarity.

\begin{theorem} \label{hm}
    Let $n,s$ be integers satisfying $s\leq \frac{n}{2^{17}\log n}$. Let
    $\FF\subset \Sigma_n$ be a family of permutations such that
    $\nu(\FF)<s$ and $\tau(\FF)\geq s$. Then
    $|\FF|\leq (s-1)(n-1)!-d_{n,1}+1$. Moreover, in the case of equality,
    $\FF$ must be of the form $\rho\FF_0\pi$ or $\rho\FF_0^{-1}\pi$, where
    $\rho,\pi\in\Sigma_n$ and
    \[
        \FF_0
        =
        \bigcup_{i=2}^{s-1}\Sigma_n[(1,i)]
        \cup
        \Sigma_{n,\sigma}[(1,1)]
        \cup
        \{\sigma\},
    \]
    for some permutation $\sigma$ satisfying $\sigma(1)\notin [s-1]$.
\end{theorem}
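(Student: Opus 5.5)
We prove Theorem~\ref{hm} with the spread approximation method of Kupavskii and Zakharov, adapted to permutations. The plan is to show that a large $s$-matching-free family is essentially covered by $s-1$ stars $\Sigma_n[(x_i,y_i)]$ whose pairs lie in a common row or column. Once this is known, the condition $\tau(\FF)\ge s$ will force at least one permutation outside the stars, and counting finishes the argument.

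\textbf{Step 1 (spread approximation).} Set $r=\lceil C\log n\rceil$. We build a family $\mathcal S$ of partial matchings of size at most $r$ such that:
\begin{itemize}
\item every $F\in\FF$ outside a small remainder $\FF'$ contains some $A\in\mathcal S$;
\item each $\FF(A)$ is $R$-spread inside the permutations extending $A$, with $R\approx n/2^{c}$.
\end{itemize}
This is the iterative procedure that repeatedly takes a maximal set $X$ violating spreadness. The remainder $\FF'$ has size at most $(n-1)!\,n^{-\Omega(1)}$, and in particular far less than $d_{n,1}\approx (n-1)!/e$.

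We then use the standard fact that spreadness lets us extend members of different $\FF(A)$ disjointly, via the random-matching lemma for permutations. Because of this, $\nu(\FF)<s$ passes to $\mathcal S$: the family $\mathcal S$ has no $s$ pairwise "disjoint and compatible" partial permutations. Finally we pass to $\mathcal S_1$, the elements of size $1$. Elements of size $\ge 2$ contribute at most $|\mathcal S_{\ge2}|\cdot (n-2)!$, and a sunflower/degree argument shows this is $O(s^2 (n-2)!\cdot\mathrm{poly})$, which is negligible compared with $d_{n,1}$ when $s\le n/(2^{17}\log n)$.

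\textbf{Step 2 (structure of $\mathcal S_1$).} The set $\mathcal S_1$ is a set of cells of $[n]^2$ containing no $s$ cells with pairwise distinct rows and columns. By K\H{o}nig's theorem, $\mathcal S_1$ is then covered by at most $s-1$ lines (rows or columns). If the lines are not all parallel, say $a$ rows and $b$ columns with $a+b\le s-1$ and $a,b\ge1$, then the union of the corresponding stars has size at most about $(s-1)(n-1)!-ab(n-2)!\cdot(\text{const})$. More to the point, without the full lines the family loses $\Omega(n!)$ unless $\mathcal S_1$ consists of $s-1$ cells $(x,y_1),\dots,(x,y_{s-1})$ in one row (or one column). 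Here we need to compare the size $\sum |\FF[(x_i,y_i)]|$ against $(s-1)(n-1)!-d_{n,1}$. We expect that $|\FF|\ge (s-1)(n-1)!-d_{n,1}$ forces $|\mathcal S_1|\ge s-1$ cells lying in a common line; otherwise we get the bound $(s-2)(n-1)!+o((n-1)!)$. Up to applying $\rho\cdot\pi$ and inversion, $\mathcal S_1\supseteq\{(1,1),\dots,(1,s-1)\}$.

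\textbf{Step 3 (exact bound and equality).} Let $\mathcal T=\bigcup_{i\le s-1}\Sigma_n[(1,i)]$. Since $\tau(\FF)\ge s$, there is some $\sigma\in\FF\setminus\mathcal T$, so $\sigma(1)\notin[s-1]$. Consider the derangement-like set $\mathcal N_i$ of permutations in $\Sigma_n[(1,i)]$ that are disjoint from $\sigma$; each $\mathcal N_i$ has size $d_{n,1}$ when $\sigma(1)\ne i$.

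For each $i$, we claim that for all but one $i$ (or in aggregate) $\FF$ must miss all of $\mathcal N_i$, or we lose comparably many elements elsewhere. Indeed, take $\tau_i\in\FF\cap\mathcal N_i$ for every $i$. Then $\sigma,\tau_1,\dots,\tau_{s-1}$ would be an $s$-matching, provided the $\tau_i$ are pairwise disjoint. Using spreadness of $\FF(\{(1,i)\})$ (Step 1, now applied exactly rather than approximately, in the style of Frankl–Kupavskii's HM proof for EMC), we can greedily choose pairwise disjoint $\tau_i$ for all $i$ in which $\FF\cap\mathcal N_i$ is large. Thus some $i_0$ has $|\FF\cap \mathcal N_{i_0}|$ tiny, and a careful exchange/peeling argument gives $|\FF|\le (s-1)(n-1)!-d_{n,1}+|\FF\setminus\mathcal T|$. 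We then bound the extra elements of $\FF\setminus \mathcal T$: each such element kills its own $d_{n,1}$-sized set, and two such elements kill more than $d_{n,1}$ jointly. Equality then forces $\FF\setminus\mathcal T=\{\sigma\}$ and $\FF=\bigcup_{i\ne i_0}\Sigma_n[(1,i)]\cup\Sigma_{n,\sigma}[(1,i_0)]\cup\{\sigma\}$, which is $\FF_0$ up to relabeling.

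\textbf{Main obstacle.} The main obstacle is this final exact step. The greedy disjoint choice of the $\tau_i$ must work even when several $\FF\cap\mathcal N_i$ are moderately large but not spread. Moreover, the losses from several outside permutations must provably add up to more than $d_{n,1}$. We would first try a weighted counting over the choice of $i$, combined with the spread-lemma to find disjoint $\tau_i$, with the $\log n$ slack in $s\le n/(2^{17}\log n)$ absorbing the error terms.
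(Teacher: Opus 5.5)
Your overall architecture matches the paper's. You use a spread approximation, reduce to $s-1$ stars, take an outsider $\sigma$ supplied by $\tau(\FF)\ge s$, count losses among the members disjoint from $\sigma$, show the outsider is unique, and finish with the equality analysis. However, the decisive step is missing, and two earlier steps are wrong as stated.

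\textbf{The gap is in Step~3.} Knowing that some $\FF\cap\mathcal N_{i_0}$ is ``small'' only gives a loss of $d_{n,1}$ minus that small amount, not the exact $d_{n,1}$, and your ``exchange/peeling argument'' is left unspecified. What is needed is a dichotomy, which the paper proves as Lemma~\ref{cross-matching}. Let $\FF_1,\dots,\FF_{s-1}$ be the members of $\FF$ in the respective stars that are disjoint from $\sigma$, sorted by size. Then either $\FF_1$ is contained in the other stars, or $|\bigcup_i\FF_i|\le (s-2.01)d_{n,1}$.
\begin{itemize}
\item If all $|\FF_i|\ge (n-1)!/10$, the $\FF_i$ are $n/40$-spread, and the spread lemma together with the rainbow lemma (Lemma~\ref{lemkee}) gives a cross matching. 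A greedy choice of disjoint $\tau_i$ with a union bound does not suffice: one permutation meets about a $(1-1/e)$-fraction of a star, so the spreadness bound gives nothing.
\item The idea you are missing is the middle case. Suppose $|\FF_1|<(n-1)!/10$ and some $\pi\in\FF_1$ lies in no other star. Use $\pi$ as a \emph{second} forbidden permutation. The permutations through $(x_i,y_i)$ avoiding both $\sigma$ and $\pi$ are double derangements, and by the Egorychev--Falikman bound (Lemma~\ref{permanent-min-degree}) there are at least $(n-1)!/7.5$ of them. So if $|\FF_i|\ge (n-1)!/4$ for all $i\ge 2$, each $\FF_i\cap\m D_{n,\overline{\pi}}$ has size greater than $0.01(n-1)!$ and is spread. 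This yields disjoint members of $\FF_2,\dots,\FF_{s-1}$ which, together with $\pi$ and $\sigma$, form an $s$-matching.
\item Otherwise two families lie below $(n-1)!/4$, and the loss is at least $1.01d_{n,1}$.
\end{itemize}
Only with this dichotomy does your ``two outsiders kill more than $d_{n,1}$'' step go through. That step is the paper's $j\neq 1$ versus $j=1$ argument, with the extra $d_{n-2}$ loss in the latter case.

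\textbf{Step~1 is false in the allowed range of $s$.} The non-singleton members of $\m S$ are not negligible compared with $d_{n,1}$. Already $s^2(n-2)!\approx (s^2/n)(n-1)!$ exceeds $d_{n,1}\approx (n-1)!/e$ once $s\gg\sqrt n$, which the hypothesis permits. The right tool is Theorem~\ref{nontrivialsimple}, applied to a maximal $\m S$: non-singletons are paid for by the missing singleton slots. This gives $|\Sigma_n[\m S]|\le (s-\frac32)(n-1)!$ for non-trivial $\m S$, which is below the target because $d_{n,1}<(n-1)!/2$.

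\textbf{Step~2 misidentifies the constraint on $\m S_1$.} Theorem~\ref{spreadapproxtheorem} gives $\nu(\m S)<s$ in the ordinary sense of pairwise disjoint subsets of $[n]^2$. Distinct cells are disjoint, even when they share a row, so this simply means $|\m S_1|\le s-1$. A K\H{o}nig cover by lines would allow an entire row, whose stars cover all of $\Sigma_n$.

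Nor can collinearity be forced at this stage. Take $s-2$ cells in row~$1$ plus the cell $(2,1)$. The union of their stars has size $(s-1)(n-1)!-(s-3)(n-2)!$, which is far above $(s-2)(n-1)!$ and within $o(d_{n,1})$ of $(s-1)(n-1)!$.

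You should therefore run Step~3 for arbitrary $s-1$ distinct cells. The common row or column then comes out only at the very end, from the requirement in the equality case that $\Sigma_{n,\sigma}[(x_1,y_1)]$ and the remaining stars be pairwise disjoint.
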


The extremal example is the union of the family $\m{HM}$ and $s-2$ full stars,
all of which are pairwise disjoint.

Theorems~\ref{emc} and~\ref{hm} follow the same proof strategy, based on the
spread approximation method. We note that Theorem~\ref{hm} implies
Theorem~\ref{emc}. Nonetheless, we give a separate proof of Theorem~\ref{emc}
because it is simpler, and because essentially the same proof gives
Theorem~\ref{emcder}. After proving Theorem~\ref{emc}, we indicate the changes
needed to prove Theorem~\ref{emcder}. The constant $2^{17}$ in the statements
can certainly be improved. It would be interesting to remove the logarithmic
factor in the denominator.

The rest of the paper is organized as follows. In Section~\ref{sect_spreads},
we give the necessary preliminaries on spread approximations and establish the
spreadness of derangements and double derangements, using certain tools from
the theory of permanents. In Section~\ref{sect_emc}, we prove
Theorem~\ref{emc} and then indicate the changes needed for
Theorem~\ref{emcder}. In Section~\ref{sect_stab}, we prove
Theorem~\ref{hm}.

\section{Spread approximations} \label{sect_spreads}

We start with the notion of spreadness, the spread lemma, and some basic
properties of spread families. We say that a family $\FF \subset {[n] \choose k}$
is $r$-spread, for some $r\geq 1$, if
\[
    \frac{|\FF(X)|}{|\FF|} \le r^{-|X|}
\]
for every $X\subset [n]$. The following statement is a variant due to
Stoeckl~\cite{St} of the breakthrough result of Alweiss, Lovett, Wu and
Zhang~\cite{Alw}.

\begin{theorem}[Spread Lemma, {\cite{Alw,St}}] \label{spreadtheorem}
    If, for some $n,k,r\geq 1$, a family
    $\FF \subset {[n] \choose \leq k}$ is $r$-spread and $W$ is a
    $\beta\delta$-random subset of $[n]$, then
    \[
        \Pr[\exists F\in \FF : F\subset W]
        \geq
        1-\left(\frac{2}{\log_2(r\delta)}\right)^\beta k.
    \]
\end{theorem}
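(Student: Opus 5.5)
The plan is to prove the Spread Lemma by iterated random sampling, tracking "minimal fragments", in the style of Alweiss--Lovett--Wu--Zhang as streamlined by Stoeckl. First, I will couple $W$ with a union $W_1\cup\dots\cup W_\beta$ of $\beta$ independent $\delta$-random subsets of $[n]$. Since $1-(1-\delta)^\beta\le\beta\delta$, a $\beta\delta$-random set stochastically dominates this union. The event $\{\exists F\in\FF: F\subset W\}$ is monotone increasing in $W$, so it suffices to prove the bound for $W=W_1\cup\dots\cup W_\beta$.

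Next comes the fragment process. Given a set $U$ (the part of $W$ revealed so far) and $F\in\FF$, I call $F\setminus U$ a \emph{fragment}. I select a \emph{minimal} fragment: for each $F$, replace $F\setminus U$ by a set $F'\setminus U$ of smallest size over $F'\in\FF$ with $F'\setminus U\subset F\setminus U$. The family $\FF_1$ of minimal fragments after exposing $W_1$ has the property that $\FF$ has a member inside $W_1\cup\dots\cup W_\beta$ as soon as $\FF_1$ has a member inside $W_2\cup\dots\cup W_\beta$. Iterating gives families $\FF_1,\FF_2,\dots,\FF_\beta$. The target is the one-round estimate that the expected size of a minimal fragment (equivalently, of the largest minimal fragment, weighted suitably) shrinks by a factor $2/\log_2(r\delta)$ per round. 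After $\beta$ rounds the expected size is at most $\bigl(2/\log_2(r\delta)\bigr)^\beta k$. Since sizes are integers, Markov's inequality shows that with at least the claimed probability some minimal fragment is empty, i.e.\ some $F\in\FF$ lies in $W$.

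The heart of the argument is the one-round estimate, and I expect it to be the main obstacle. I would prove it by an encoding/double counting argument over pairs $(W_1,F)$, with $W_1$ of a fixed size $m\approx\delta n$ (conditioning on $|W_1|$ and using uniform sets). Suppose the minimal fragment $T=F'\setminus W_1$ has size $t$. Encode the pair by the set $W_1\cup F$ together with a description of which part of it is $T$. Minimality forces $T$ to be determined, up to few choices, by $W_1\cup F$ and a set $X=F\cap F'$ contained in it. The number of preimages is then controlled by $|\FF(X)|\le r^{-|X|}|\FF|$. Summing over $t$ gives a bound of roughly $(\text{const}/(r\delta))^{t}$ times the number of pairs, once the binomial ratio $\binom{n}{m+t}/\binom{n}{m}\approx\delta^{-t}$ is accounted for. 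This makes $\Pr[|T|\ge t]$ decay geometrically in $t$ relative to the original size. Converting this into the multiplicative factor $2/\log_2(r\delta)$ on the expected size is exactly Stoeckl's sharpening, and getting the constant right is the delicate part.

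One more subtlety needs attention. Spreadness is a property of $\FF$, not obviously of $\FF_i$. To handle it, I would carry out every round's counting relative to the original $r$-spread family $\FF$. Each minimal fragment is $F\setminus U$ for some $F\in\FF$ and the accumulated random set $U$, so the encoding always bounds preimage counts using $|\FF(X)|\le r^{-|X|}|\FF|$. The rounds are then conditioned sequentially on $W_1,\dots,W_{i-1}$, using independence of $W_i$.
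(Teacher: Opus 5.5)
\textbf{Verdict: genuine gap.} Your outer layer is sound. The pieces that work are the monotone coupling of the $\beta\delta$-random set with $W_1\cup\dots\cup W_\beta$, the passage to minimal fragments, and Markov at the end. Keeping spreadness by indexing fragments by the original $F$ also works: each fragment is a subset of its $F$, so at most $|\FF(X)|\le r^{-|X|}|\FF|$ indices have a fragment containing $X$. (The paper gives no proof of this statement; it imports it from Alweiss--Lovett--Wu--Zhang and Stoeckl.)

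\textbf{The encoding you sketch fails.} The whole content of the theorem is the one-round estimate, which you leave to ``Stoeckl's sharpening''. If you write down $W_1\cup F$, that set has $m+|F\setminus W_1|$ elements, so the cost is $\binom{n}{m+|F\setminus W_1|}/\binom{n}{m}\approx\delta^{-|F\setminus W_1|}$. You pay $\log_2(1/\delta)$ bits for essentially every element of $F$, not just for the $t$ elements of the fragment; your ratio $\binom{n}{m+t}/\binom{n}{m}$ does not match your own encoding. Spreadness refunds only $\log_2 r$ bits per element of a set of size about $t$. So you get roughly $\Pr[|T|\ge t]\le C^{k}\delta^{-k}r^{-t}$, the original ALWZ-type estimate. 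It forces only $t\le k\log_2(C/\delta)/\log_2 r$, a shrinkage factor tending to $1$ as $\delta\to 0$ with $r\delta$ fixed. That is exactly the regime in which the paper uses the lemma: $\frac r4\delta\ge 2^4$ and $\delta=1/(8s\log_2(2k))$ small.

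\textbf{The missing idea} is to encode $V:=W_1\cup T$, where $T=F'\setminus W_1$ is the minimal fragment.
\begin{enumerate}
\item Let $G$ be any member of the current family with $G\subseteq V$. Then $G\setminus W_1\subseteq T$.
\item Also $G\subseteq F\cup W_1$, so $G$ is a competitor in the minimization, giving $|G\setminus W_1|\ge|T|$. Hence $G\setminus W_1=T$.
\item Fix $G=G(V)$ canonically from $V$ alone. Then $T$ is one of at most $\binom{k}{t}$ subsets of $G(V)$, $W_1=V\setminus T$ is recovered, and $F\in\FF[T]$ has at most $r^{-t}|\FF|$ choices.
\item Since $|V|=m+t$, this gives $\Pr[|T|=t]\le\binom{k}{t}\bigl(\frac{1-\delta}{r\delta}\bigr)^t$ for $|W_1|=\delta n$. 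Now $\log_2(1/\delta)$ is paid only per fragment element.
\end{enumerate}

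\textbf{The iteration is still unjustified.} The overhead ($\binom{k}{t}$, or $2^k$) involves the \emph{maximal} size of sets in the current family. After one round only the \emph{average} fragment size has dropped, and individual fragments may still have size close to $k$. Your claim that the expected size shrinks by $2/\log_2(r\delta)$ per round (``equivalently, of the largest minimal fragment'') treats average and maximum as interchangeable, which they are not. You would need either an expectation-to-expectation one-round inequality for spread multifamilies with sets of varying sizes, or a truncation argument that accounts for discarded indices and the spreadness lost. Neither is supplied.

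\textbf{Minor point.} Your coupling requires $\beta\in\mathbb{N}$, whereas the paper later applies the lemma with $\beta=\log_2(2k)$.
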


The next lemma states that any large subfamily of a spread family is also
spread.

\begin{lemma} \label{spreadsubfamily}
    Let $\FF$ be an $r$-spread family, and let $\HH\subset \FF$ satisfy
    $|\HH|\geq c|\FF|$ for some $c\in(0,1)$. Then $\HH$ is $cr$-spread.
\end{lemma}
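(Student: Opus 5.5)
The plan is to check the defining inequality of spreadness for $\HH$ directly, one set $X$ at a time, by comparing $\HH(X)$ with $\FF(X)$. The point is that restricting to a subfamily can only shrink the numerator, while the denominator shrinks by at most a factor $c$.

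Fix an arbitrary $X\subset[n]$. Since $\HH\subset\FF$, every set $H\in\HH$ with $X\subset H$ is also a member of $\FF$ containing $X$. Hence $\HH(X)\subset\FF(X)$, and so $|\HH(X)|\le|\FF(X)|$. The $r$-spreadness of $\FF$ then gives
\[
    \frac{|\HH(X)|}{|\HH|}
    \le \frac{|\FF(X)|}{c|\FF|}
    \le \frac{r^{-|X|}}{c}.
\]
We need this to be at most $(cr)^{-|X|}=c^{-|X|}r^{-|X|}$, which is equivalent to $c^{-1}\le c^{-|X|}$.

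We now split into cases according to $|X|$.

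When $|X|\ge 1$, the inequality $c^{-1}\le c^{-|X|}$ holds because $c\in(0,1)$, so $c^{-1}\ge1$ and the powers of $c^{-1}$ increase with the exponent.

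When $X=\emptyset$, the bound above is too weak, but the required inequality is trivial: $|\HH(\emptyset)|/|\HH|=1=(cr)^{0}$.

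This covers every $X$, so $\HH$ is $cr$-spread. The only point needing any care is the degenerate case $X=\emptyset$, which has to be treated separately, together with the use of $c<1$ for nonempty $X$.
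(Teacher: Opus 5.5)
Your proof is correct and follows the paper's argument: bound $|\HH(X)|\le|\FF(X)|$, use $|\HH|\ge c|\FF|$ and the $r$-spreadness of $\FF$, then use $c^{-1}\le c^{-|X|}$. Treating $X=\emptyset$ separately is a small improvement over the paper. Its final inequality $r^{-|X|}/c\le (cr)^{-|X|}$ is false when $|X|=0$, even though the spreadness condition holds trivially there.
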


\begin{proof}
    Indeed, for any set $X$ we have
    \[
        \frac{|\HH(X)|}{|\HH|}
        \leq
        \frac{|\FF(X)|}{c|\FF|}
        \le
        \frac{r^{-|X|}}{c}
        \leq
        (cr)^{-|X|}.
    \]
\end{proof}
\begin{lemma}\label{maxrelhom}
    Fix $r\geq 1$ and a family $\FF$. Let $X$ be an inclusion-maximal set
    satisfying
    \[
        |\FF(X)|\geq r^{-|X|}|\FF|.
    \]
    Then $\FF(X)$ is $r$-spread.
\end{lemma}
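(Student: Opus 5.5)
The plan is to argue by contradiction, using the maximality of $X$ and the identity $\FF(X)(Y)=\FF(X\cup Y)$ for sets $Y$ disjoint from $X$. We must show that for every set $Y$,
\[
    |\FF(X)(Y)|\leq r^{-|Y|}|\FF(X)|.
\]

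First dispose of the degenerate cases.
\begin{itemize}
    \item If $Y\cap X\neq\emptyset$, then no member of $\FF(X)$ contains $Y$, since every member of $\FF(X)$ is disjoint from $X$. Hence $|\FF(X)(Y)|=0$ and the bound holds trivially.
    \item If $Y=\emptyset$, the bound is an equality.
\end{itemize}

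So assume $Y\neq\emptyset$ and $Y\cap X=\emptyset$. In this case
\[
    \FF(X)(Y)=\{F\setminus(X\cup Y): X\cup Y\subset F,\ F\in\FF\}=\FF(X\cup Y),
\]
because the sets $F\setminus X$ with $Y\subset F\setminus X$ correspond bijectively to the sets $F\in\FF$ containing $X\cup Y$. The set $X\cup Y$ strictly contains $X$. By the inclusion-maximality of $X$, the set $X\cup Y$ fails the defining inequality, so
\[
    |\FF(X\cup Y)|<r^{-|X\cup Y|}|\FF|=r^{-|X|-|Y|}|\FF|.
\]

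Now combine this with the hypothesis on $X$, namely $r^{-|X|}|\FF|\leq|\FF(X)|$. This gives
\[
    |\FF(X)(Y)|=|\FF(X\cup Y)|<r^{-|Y|}\cdot r^{-|X|}|\FF|\leq r^{-|Y|}|\FF(X)|,
\]
which is exactly $r$-spreadness of $\FF(X)$.

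There is essentially no obstacle here. The only points needing care are the bookkeeping $\FF(X)(Y)=\FF(X\cup Y)$ for disjoint $X$ and $Y$, and the edge cases handled above. One should also note that $\FF(X)$ is nonempty, since $|\FF(X)|\geq r^{-|X|}|\FF|>0$ whenever $\FF\neq\emptyset$, so the ratio in the definition of spreadness makes sense.
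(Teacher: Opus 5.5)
Your proposal is correct and is the paper's argument: for nonempty $Y$ disjoint from $X$, maximality gives $|\FF(X\cup Y)|<r^{-|X|-|Y|}|\FF|\le r^{-|Y|}|\FF(X)|$. Your handling of $Y\cap X\neq\emptyset$ and of $\FF(X)$ being nonempty just spells out bookkeeping that the paper leaves implicit.
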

\begin{proof}
    Let $Y$ be disjoint from $X$. If $Y=\emptyset$, there is nothing to prove.
    Otherwise, by the maximality of $X$,
    \[
        |\FF(X\cup Y)|<r^{-|X|-|Y|}|\FF|\le
        r^{-|Y|}|\FF(X)|.
    \]
    Hence $\FF(X)$ is $r$-spread.
\end{proof}

\begin{lemma} \label{largecontainsspread}
    If $\FF\subset {[n] \choose \leq k}$ and $|\FF|>r^k$, then $\FF(X)$ is
    $r$-spread for some $X$ such that $|\FF(X)|>1$.
\end{lemma}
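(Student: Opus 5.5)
Take $X$ to be an inclusion-maximal set satisfying $|\FF(X)|\geq r^{-|X|}|\FF|$; Lemma~\ref{maxrelhom} then makes $\FF(X)$ $r$-spread, and the whole task is to check that this $X$ also satisfies the conclusion $|\FF(X)|>1$. We first have to make sure such an $X$ exists. The empty set always satisfies the inequality, since $|\FF(\emptyset)|=|\FF|=r^{0}|\FF|$. Moreover, every candidate $X$ has $\FF(X)\neq\emptyset$, because the right-hand side is positive (as $|\FF|>r^k\geq 1$). So each candidate is contained in some $F\in\FF$, hence $|X|\le k$. The candidates therefore form a nonempty finite collection of subsets of $[n]$, and an inclusion-maximal member exists.

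Next we bound $|\FF(X)|$ from below. From the defining inequality, $|\FF(X)|\geq r^{-|X|}|\FF|>r^{-|X|}r^{k}=r^{k-|X|}$. Since $|X|\le k$ and $r\geq 1$, this gives $r^{k-|X|}\geq 1$, so $|\FF(X)|>1$.

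There is no serious obstacle. The only points needing care are that the maximal element exists and that every candidate has size at most $k$, and both follow from $|\FF|>0$.
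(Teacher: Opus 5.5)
Your proof is correct and is essentially the paper's argument: take an inclusion-maximal $X$ with $|\FF(X)|\ge r^{-|X|}|\FF|$, invoke Lemma~\ref{maxrelhom} for spreadness, and use $|X|\le k$ to get $|\FF(X)|>r^{k-|X|}\ge 1$. The only differences are cosmetic. The paper first splits off the case where $\FF$ itself is $r$-spread (taking $X=\emptyset$), which your uniform argument shows is unnecessary, and you spell out the existence of a maximal $X$ and the bound $|X|\le k$, which the paper uses tacitly.
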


\begin{proof}
    If $\FF$ is $r$-spread, then we may take $X=\emptyset$. Otherwise, take $X$ as in Lemma~\ref{maxrelhom}. Then $\m F(X)$ is $r$-spread. Moreover, $|\FF(X)|\geq 2$, since otherwise
    \[
        |\FF(X)|=1<r^{-k}|\FF|\leq r^{-|X|}|\FF|.
    \]
    \end{proof}

The following convenient technical lemma was proved by Keevash, Lifshitz, Long
and Minzer~\cite{Kee21}. It is essentially an equivalent formulation of the
rainbow version of the Erd\H{o}s Matching Conjecture.

\begin{lemma}[\cite{Kee21}] \label{lemkee}
    Let $\m H_1,\ldots,\m H_s$ be upwards-closed families in $2^{[m]}$.
    Assume that, for some $p\in(0,1)$ and a $p$-random set $W$, we have
    \[
        \Pr[W\in \m H_i]\geq 3sp
    \]
    for every $i\in[s]$. Then there exist pairwise disjoint sets
    $A_1\in \m H_1,\ldots,A_s\in \m H_s$.
\end{lemma}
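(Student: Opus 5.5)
The plan is to split $[m]$ at random into many disjoint pieces, each of which looks like a $p$-random set. We then show that, with positive probability, the families can be assigned to distinct pieces. Since the pieces are disjoint, this produces the required disjoint sets.

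First, the degenerate cases. If $3sp>1$ the hypothesis cannot hold, so we may assume $p\le 1/(3s)$. For $s=1$ the claim is immediate: $\Pr[W\in\m H_1]>0$ gives some $A_1\in\m H_1$.

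\textbf{Step 1: the partition.} Put $k:=\lfloor 1/p\rfloor\ge 3s$. Colour each element of $[m]$ independently and uniformly with one of $k$ colours, and let $W_1,\dots,W_k$ be the colour classes. Each $W_j$ is a $(1/k)$-random set with $1/k\ge p$. Since every $\m H_i$ is upwards closed, the probability $\Pr[W_j\in\m H_i]$ is monotone in the sampling probability. Hence
\[
\Pr[W_j\in\m H_i]\ \ge\ \Pr[W\in\m H_i]\ \ge\ 3sp .
\]

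\textbf{Step 2: the bipartite graph.} Form a bipartite graph with parts $[s]$ (the families) and $[k]$ (the classes), joining $i$ to $j$ whenever $W_j\in\m H_i$. A matching saturating $[s]$, say $i\mapsto j_i$, yields pairwise disjoint sets $A_i:=W_{j_i}\in\m H_i$, as required. Hall's condition holds automatically if every $i\in[s]$ has degree at least $s$. Indeed, every $I\subset[s]$ satisfies $|I|\le s$, and then $|N(I)|\ge s\ge |I|$. So it suffices to show that, with positive probability, $\deg(i)\ge s$ for all $i$ simultaneously.

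\textbf{Step 3: concentration of degrees.} Fix $i$ and write $\deg(i)=\sum_{j}\mathbf 1[W_j\in\m H_i]$. Its expectation satisfies
\[
\mathbb E\deg(i)\ \ge\ 3spk\ \ge\ 3s(1-p)\ \ge\ 3s-1 .
\]
The indicators are increasing functions of the disjoint classes of a balls-into-bins assignment, so they are negatively associated. Chernoff's lower-tail bound therefore applies as if they were independent, and gives $\Pr[\deg(i)<s]\le e^{-c s}$ for an absolute constant $c$. Since $s<\frac{2}{3}\mathbb E\deg(i)$, one can take roughly $c\approx 2/3$. A union bound over the $s$ families then bounds the failure probability by $s e^{-cs}$.

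\textbf{Main obstacle.} The delicate point is exactly this last step: making the constant $3$ suffice. The union bound needs $s e^{-cs}<1$ for every $s\ge 2$, and the small values of $s$ are where the inequality is tight. I would check the cases $s=2,3$ directly, or handle them with a sharper tail estimate than the generic Chernoff bound, for example by comparing with the exact binomial tail. If this turns out to be too tight, the fallback is to avoid the crude ``all degrees $\ge s$'' criterion and verify Hall's condition more carefully. A Hall violator is witnessed by a set $J$ of fewer than $|I|$ classes, with all classes outside $J$ failing every $\m H_i$ for $i\in I$. Union-bounding over such witnesses should buy the missing slack.
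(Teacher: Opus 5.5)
The paper does not prove this lemma. It is imported from \cite{Kee21} and used as a black box, so there is no internal proof to compare with. Your random-colouring argument has to stand on its own, and it does. The colour classes $W_j$ are $(1/k)$-random with $1/k\ge p$, so monotonicity gives $\Pr[W_j\in\m H_i]\ge 3sp$. The indicators $\mathbf 1[W_j\in\m H_i]$, $j\in[k]$, are non-decreasing functions of disjoint blocks of the balls-into-bins indicators, hence negatively associated, so the Chernoff lower tail applies. Finally, minimum degree at least $s$ on the side $[s]$ gives Hall's condition, or equivalently a greedy choice of distinct classes.

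The step you single out as the main obstacle is not one. You need neither a separate check of small $s$ nor the refined Hall analysis. Put $\mu:=\mathbb E\deg(i)\ge 3s-1$ and apply $\Pr[X\le(1-\delta)\mu]\le e^{-\delta^2\mu/2}$ with $(1-\delta)\mu=s-1$:
\[
\Pr[\deg(i)\le s-1]\le \exp\Bigl(-\frac{(\mu-s+1)^2}{2\mu}\Bigr)\le \exp\Bigl(-\frac{2s^2}{3s-1}\Bigr)<e^{-2s/3}.
\]
The middle inequality holds because $(\mu-a)^2/(2\mu)$ is increasing in $\mu$ for $\mu>a$. The union bound then bounds the failure probability by
\[
s e^{-2s/3}\le \max_{x>0}xe^{-2x/3}=\frac{3}{2e}<1
\]
for every $s\ge 1$. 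So a good colouring exists, and your proof is complete as written, with $c=2/3$ exactly rather than ``roughly''.
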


Next, we discuss the spreadness of several families of permutations, viewed as
families of sets.

We say that $S\subset [n]^2$ is a \emph{partial permutation} if $S\subset\sigma$
for some permutation $\sigma\in\Sigma_n$. For every partial permutation $S$,
we have  $|\Sigma_n(S)|=(n-|S|)!.$
If $S$ is not a partial permutation, then $|\Sigma_n(S)|=0$. We say that a
family $\FF$ is $(r,q)$-spread if, for every set $A$ of size at most $q$, the
family $\FF(A)$ is $r$-spread. We shall need the following three lemmas.

\begin{lemma} \label{sspread}
    The family $\Sigma_n$ is $\left(\frac n4,\frac n4\right)$-spread.
\end{lemma}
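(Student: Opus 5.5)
We have to show that for every partial permutation $A$ with $|A|\le n/4$, the family $\Sigma_n(A)$ is $\frac n4$-spread. That is, for every $X\subset[n]^2$,
\[
|\Sigma_n(A)(X)|\le (n/4)^{-|X|}\,|\Sigma_n(A)|.
\]
The plan is a direct computation using the formula $|\Sigma_n(S)|=(n-|S|)!$ for partial permutations $S$.

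First we reduce to the relevant case. If $A$ is not a partial permutation, then $\Sigma_n(A)$ is empty and there is nothing to prove, so assume $|A|=a\le n/4$ and $A$ is a partial permutation. Next we may assume $X\cap A=\emptyset$: $\Sigma_n(A)$ consists of sets disjoint from $A$, so any $X$ meeting $A$ has $\Sigma_n(A)(X)=\emptyset$. Likewise, if $A\cup X$ is not a partial permutation, the left-hand side vanishes. In the remaining case $A\cup X$ is a partial permutation of size $a+x$, where $x=|X|$, and
\[
|\Sigma_n(A)(X)|=(n-a-x)!,\qquad |\Sigma_n(A)|=(n-a)!.
\]

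So we need
\[
\frac{(n-a-x)!}{(n-a)!}=\prod_{j=0}^{x-1}\frac{1}{n-a-j}\le \left(\frac 4n\right)^{x}.
\]
Note that $x\le n-a$, since $A\cup X$ is a partial permutation. We split into two cases.

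If $x\le n/2$, each factor satisfies $n-a-j\ge n-n/4-n/2=n/4$, and the bound follows termwise.

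If $x>n/2$, the termwise bound fails for the last factors, so we compare the whole product. The product equals $\frac{(n-a-x)!}{(n-a)!}$. Write $m=n-a$, so $m\ge 3n/4$ and $x\le m$; we then need $m!/(m-x)!\ge (n/4)^x$. This is the only real step. The plan is to pair factors: the first $\lceil x/2\rceil$ factors $m,m-1,\dots$ are each at least $m-x/2\ge m/2\ge 3n/8$. The remaining factors are at least $1$, but together with the large ones they should suffice. Concretely, I would use $m!/(m-x)!\ge (m/e)^x$, which follows from $k!\ge (k/e)^k$ applied to the product when $x=m$, and more generally from $\frac{m!}{(m-x)!}\ge \frac{m^m e^{-m}}{(m-x)^{m-x}e^{-(m-x)}}$ together with the simple bound $m!\ge (m/e)^m$ and $(m-x)!\le (m-x)^{m-x}$ (with the latter suitably adjusted near $0$). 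Since $m/e\ge 3n/(4e)>n/4$, this gives the claim.

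The main obstacle is making this factorial estimate clean for all $x$ up to $m$. A simple alternative route: $\prod_{j<x}(m-j)\ge \prod_{j<x}\max(m-j,\,?)$. Failing that, the Stirling-type inequality $\frac{m!}{(m-x)!}\ge\left(\frac{m}{e}\right)^x$ can be proved by induction on $x$, using $(1-1/k)^{k-1}\ge 1/e$. That finishes the lemma.
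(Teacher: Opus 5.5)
Your argument is correct and is essentially the paper's. Both proofs use $|\Sigma_n(S)|=(n-|S|)!$ to reduce to the falling-factorial bound $m!/(m-x)!\ge (m/e)^x$ with $m=n-|A|\ge 3n/4$, and then conclude from $m/e\ge 3n/(4e)>n/4$.

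The paper proves that bound in one line: $m(m-1)\cdots(m-x+1)\ge (m!)^{x/m}\ge (m/e)^x$, since the $x$ largest factors of $m!$ have geometric mean at least that of all $m$ factors. This also makes your case split at $x=n/2$ unnecessary.

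Your first justification of the bound does not work as written. The bounds $m!\ge (m/e)^m$ and $(m-x)!\le (m-x)^{m-x}$ only give $(m/e)^m/(m-x)^{m-x}$. This is smaller than your displayed expression by the factor $e^{m-x}$, and it is at least $(m/e)^x$ only when $x\ge (1-1/e)m$. Obtaining the displayed expression this way would need $(m-x)!\le ((m-x)/e)^{m-x}$, which is false.

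Your induction fallback does work, provided you peel off the top factor:
$$\frac{m!}{(m-x)!}=m\cdot\frac{(m-1)!}{((m-1)-(x-1))!}\ge m\left(\frac{m-1}{e}\right)^{x-1},$$
and then use $(1-1/m)^{x-1}\ge (1-1/m)^{m-1}\ge 1/e$. Peeling off the bottom factor $m-x+1$ instead would require $m-x+1\ge m/e$, which fails for large $x$.
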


\begin{proofw}
    Let $S$ be a partial permutation with $|S|\leq n/4$. It is enough to show
    that $\Sigma_n(S)$ is $\frac n4$-spread. Let $X\supset S$.

    If $X$ is not a partial permutation, then $\Sigma_n(X)=\emptyset$ and there
    is nothing to prove. Otherwise,
    \[
        \frac{|\Sigma_n(S)|}{|\Sigma_n(X)|}
        =
        \frac{(n-|S|)!}{(n-|X|)!}.
    \]
    Put $m=n-|S|$ and $t=|X|-|S|$. Then
    \[
        \frac{(n-|S|)!}{(n-|X|)!}
        =
        m(m-1)\cdots(m-t+1)
        \geq
        (m!)^{t/m}.
    \]
    Since $m!\geq (m/e)^m$, we get
    \[
        \frac{|\Sigma_n(S)|}{|\Sigma_n(X)|}
        \geq
        \left(\frac{n-|S|}{e}\right)^{|X|-|S|}
        >
        \left(\frac n4\right)^{|X|-|S|},
    \]
    because $|S|\leq n/4$.
\end{proofw}

Next, we shall bound the spreadness of the family of derangements. Let us first discuss some simple properties of that family.  

 Using inclusion-exclusion principle, it is easy to see that $|\m{D}_n| = n!\sum_{i=1}^n \frac{(-1)^{i+1}}{i!}$.  It is a well-known fact that $|\m{D}_n| = \lceil \frac{n!}{e} \rfloor$, i.e. $|\m{D}_n|$ is the nearest integer to $\frac{n!}{e}$. Let us show that  $$d_{n,1} = d_{n-1} + d_{n-2}.$$
Indeed, we can write $\m{D}{[(x, y)]} = \FF_1 \cup\FF_2$, where $\FF_1 = \{ \pi \in \m{D}[(x, y)] \mid \pi(2) = 1 \}$, $\FF_2 = \{ \pi \in \m{D}[(x, y)] \mid \pi(2) \neq 1 \}$. It is easy to see that $|\FF_1| = d_{n-2}$ and $|\FF_2| = d_{n-1}$.

\begin{lemma} \label{dspread}
    The family $\m D_n$ is $\left(\frac n{12},\frac n4\right)$-spread.
\end{lemma}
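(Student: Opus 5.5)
We need to show that for any partial permutation $S$ with $|S|\le n/4$ and any $X\supset S$, the ratio $|\m D_n(X)|/|\m D_n(S)|$ is at most $(n/12)^{-(|X|-|S|)}$. The plan is to compare derangements containing a fixed partial permutation with all permutations containing it, and then reuse the computation from Lemma~\ref{sspread}. We may assume $S$ and $X$ are partial permutations avoiding the diagonal, since otherwise $\m D_n(X)=\emptyset$ (or $\m D_n(S)=\emptyset$, which makes the statement vacuous).

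\textbf{Upper bound.} Trivially,
\[
|\m D_n(X)|\le |\Sigma_n(X)|=(n-|X|)!.
\]

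\textbf{Lower bound.} The key step is
\[
|\m D_n(S)|\ge c\,(n-|S|)!\qquad\text{for } |S|\le n/4,
\]
with an absolute constant $c$, say $c\ge 1/4$. To prove it, remove the rows and columns used by $S$. What remains is the set of bijections between two sets $R,C$ of size $m=n-|S|\ge 3n/4$. These bijections must avoid a set $B$ of forbidden positions, namely the diagonal pairs $(i,i)$ with $i\in R\cap C$. The set $B$ is a partial matching with $|B|\le m$.

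Apply inclusion--exclusion (or Bonferroni truncated after the second term). The number of bijections avoiding $B$ is
\[
\sum_{j}(-1)^j\binom{|B|}{j}(m-j)!\;\ge\; m!\Bigl(1-\tfrac{|B|}{m}+\tfrac{|B|(|B|-1)}{2m(m-1)}-\cdots\Bigr).
\]
This is $m!\sum_{j\le b}(-1)^j\binom{b}{j}\frac{(m-j)!}{m!}$ with $b=|B|$. A cleaner route is the standard estimate that this quantity is at least $m!\,(1-1/m)^{b}$-ish, or exactly $\ge m!/e - 1$ when $b=m$. In general the count is monotone decreasing in $b$, since adding a forbidden cell in a new row and column removes some bijections. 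Hence it is at least the number of derangements of $[m]$, which is at least $m!/3$ for $m\ge 2$. This yields $c=1/3$.

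\textbf{Combining the bounds.} We get
\[
\frac{|\m D_n(X)|}{|\m D_n(S)|}\le 3\,\frac{(n-|X|)!}{(n-|S|)!}\le 3\Bigl(\frac{e}{n-|S|}\Bigr)^{t}\le 3\Bigl(\frac{4e}{3n}\Bigr)^{t},
\]
where $t=|X|-|S|$, using the computation from Lemma~\ref{sspread}. For $t\ge1$, $3\cdot(4e/3)^t\le 12^t$ holds since $3\cdot 4e/3\approx 10.9\le 12$, and the ratio $12/(4e/3)>3^{1/t}$ for all larger $t$ as well. This gives $(n/12)^{-t}$. If $t=0$ there is nothing to prove.

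\textbf{Main obstacle.} The only nontrivial point is the monotonicity claim in the lower bound: the number of perfect matchings of $K_{m,m}$ avoiding a partial matching $B$ is minimized when $B$ is perfect. I would prove it by the recursion behind $d_{n,1}=d_{n-1}+d_{n-2}$. Let $a_{m,b}$ denote the count. Then $a_{m,b}=a_{m,b+1}+a_{m-1,b}$, since bijections avoiding $b$ cells split by whether they use the extra cell. This gives $a_{m,b}\ge a_{m,b+1}$, so $a_{m,b}\ge a_{m,m}=d_m\ge m!/3$ for $m\ge 2$; the case $m\ge 3n/4$ is fine for $n\ge 3$.
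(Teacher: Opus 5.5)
Your argument is correct and is essentially the paper's proof: you bound $|\m D_n(S)|\ge d_{n-|S|}\ge (n-|S|)!/3=|\Sigma_n(S)|/3$ and then transfer the spreadness of $\Sigma_n(S)$ from Lemma~\ref{sspread}. Your direct computation $3\bigl(\tfrac{4e}{3n}\bigr)^t\le\bigl(\tfrac{12}{n}\bigr)^t$ just unpacks the paper's appeal to Lemma~\ref{spreadsubfamily}, and you additionally justify $|\m D_n(S)|\ge d_{n-|S|}$, which the paper only asserts, via the recursion $a_{m,b}=a_{m,b+1}+a_{m-1,b}$ (equivalently, by extending the forbidden partial matching to a perfect one).
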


\begin{proofw}
    Let $S$ be a partial permutation such that $|S|\leq n/4$ and
    $\m D_n(S)\neq\emptyset$. Then
    \[
        |\m D_n(S)|
        \geq
        d_{n-|S|}
        \geq
        \frac{(n-|S|)!}{e}-1
        >
        \frac{(n-|S|)!}{3}
        =
        \frac{|\Sigma_n(S)|}{3}.
    \]
    Since $\m D_n(S)\subset \Sigma_n(S)$ and $\Sigma_n(S)$ is $\frac n4$-spread
    by Lemma~\ref{sspread}, Lemma~\ref{spreadsubfamily} implies that
    $\m D_n(S)$ is $\frac n{12}$-spread.
\end{proofw}

In order to lower bound the spreadness of the family of double derangements, we
need the following consequence of the Egorychev--Falikman theorem~\cite{E,F}.

\begin{lemma} \label{permanent-min-degree}
    Let $A$ be an $N\times N$ $(0,1)$-matrix such that every row and every
    column contains at least $N-2$ ones. Then, for all $N\geq 400$,
    \[
        \operatorname{perm}(A)>\frac{1}{7.5}N!.
    \]
\end{lemma}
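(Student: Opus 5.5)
We apply the Egorychev--Falikman theorem (van der Waerden's conjecture) after normalizing $A$ to a doubly stochastic matrix. That theorem states that every $N\times N$ doubly stochastic matrix $B$ satisfies $\operatorname{perm}(B)\geq N!/N^N$. The difficulty is that $A$ itself need not have constant row and column sums: some rows have $N$ ones, some $N-1$, some $N-2$. So the first step is to find a doubly stochastic matrix $B$ whose support lies in the support of $A$ and whose entries are at most $c/N$ for a constant $c$ close to $1$. Then $\operatorname{perm}(A)\geq (N/c)^N\operatorname{perm}(B)$, because each entry of $B$ is at most $c/N$ times the corresponding entry of $A$ and the permanent is monotone and multilinear.

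The cleanest choice is to complement. Let $J$ be the all-ones matrix and $C=J-A$. Then $C$ is a $(0,1)$-matrix with at most $2$ ones in every row and column. Every bipartite graph with maximum degree at most $2$ is a subgraph of a $2$-regular bipartite multigraph; one can pad it, using that the total deficiencies on both sides are equal. By König's theorem, a $2$-regular bipartite multigraph is the union of two perfect matchings $P_1,P_2$ (permutation matrices). Hence $C\leq P_1+P_2$ entrywise. Set $M=P_1+P_2$, which has all row and column sums equal to $2$, and take
\[
B=\frac{J-M}{N-2}.
\]
This $B$ is nonnegative provided $P_1$ and $P_2$ are disjoint. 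If they overlap, $J-M$ has an entry equal to $-1$, so we must ensure $M$ is a $(0,1)$-matrix. For $N\geq 4$ we can instead embed $C$ into a simple $2$-regular bipartite graph. A simple bipartite graph of maximum degree $2$ on $N+N$ vertices extends to a simple $2$-regular one (a union of even cycles, each of length at least $4$) by adding edges between deficient vertices while avoiding existing edges; this is easy when $N$ is large. With this choice, $J-M$ is a $(0,1)$-matrix with all row and column sums $N-2$, and $J-M\leq A$.

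Applying Egorychev--Falikman to $B$ gives
\[
\operatorname{perm}(A)\geq \operatorname{perm}(J-M)=(N-2)^N\operatorname{perm}(B)\geq (N-2)^N\frac{N!}{N^N}=N!\left(1-\frac2N\right)^N.
\]
For $N\geq 400$ we have $(1-2/N)^N\geq e^{-2}\cdot(1-O(1/N))$. Concretely, $\ln(1-x)\geq -x-x^2$ for $x\leq 1/2$ gives $(1-2/N)^N\geq e^{-2-4/N}\geq e^{-2.01}\approx 0.134>1/7.5\approx 0.1333$, using $4/N\leq 0.01$. This yields the claimed bound.

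The main obstacle is the embedding step: extending the complement $C$ (maximum degree $2$) to a simple $2$-regular bipartite graph. I would argue greedily. While some row vertex $u$ and some column vertex $v$ both have degree less than $2$ and are not adjacent, add the edge $uv$. If every deficient row vertex is adjacent to every deficient column vertex, then there are few deficient vertices (at most $2$ on each side by the degree bound, with equal total deficiency). In that case perform a local switch: take an existing edge $xy$ far from them and replace it by $uy$ and $xv$, which is possible for $N$ large. Alternatively, one can avoid simplicity entirely. Apply Egorychev--Falikman to $B=(J-C)/(N-1)+\text{correction}$, or use Schrijver-type bounds for regular matrices; but the simple-graph extension seems the most direct route.
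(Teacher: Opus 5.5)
Your greedy enlargement of the zero-graph $C$ of $A$, and your Egorychev--Falikman step when the resulting $M$ is $2$-regular, match the paper's first case. Your estimate $(1-2/N)^N\ge e^{-2.01}>1/7.5$ is also fine. The gap is the claim that $C$ always extends to a \emph{simple} $2$-regular bipartite graph on the same vertex set, and this claim is false.

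Let $C$ be a $2$-regular simple bipartite graph (e.g.\ a Hamiltonian cycle) on $N-1$ row and $N-1$ column vertices, plus one edge $uv$ between the two remaining vertices. Equivalently, $A_{uv}=0$, and every other row and column has exactly two zeros, all lying outside row $u$ and column $v$. To reach $2$-regularity you must join $u$ to a column vertex of degree less than $2$. The only such vertex is $v$, which is already adjacent to $u$. So no $(0,1)$-matrix $A''\le A$ has all line sums equal to $N-2$.

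Maximality in fact forces exactly this picture whenever the greedy stops short of $2$-regularity: one deficient vertex per side, both of degree $1$, adjacent. (Your bound ``at most $2$ per side'' is loose.) Your local switch, which deletes $xy$ and adds $uy$ and $xv$, does produce a simple $2$-regular $M$. But here every edge is a zero of $A$, so $(J-M)_{xy}=1>0=A_{xy}$, and $\operatorname{perm}(A)\ge\operatorname{perm}(J-M)$ no longer follows.

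This configuration is precisely the second case of the paper's proof, and it needs a separate argument. The paper expands along row $u$. Column $v$ is all ones outside row $u$, so each minor $A'_{uj}$ with $j\neq v$ dominates $A'_{uv}$ after relabeling columns. Hence $\operatorname{perm}(A')\ge (N-1)\operatorname{perm}(A'_{uv})$. Since $A'_{uv}$ is $(N-1)\times(N-1)$ with all line sums $N-3$, Egorychev--Falikman gives $\frac{N-1}{N}\bigl(1-\frac{2}{N-1}\bigr)^{N-1}N!>N!/7.5$.

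Alternatively, you can stay within your own framework of a doubly stochastic $B$ supported on $A'$ with small entries, but take $B$ non-uniform. Put $\frac{1}{N-1}$ in row $u$ and column $v$ away from $(u,v)$, and $\frac{N-2}{(N-1)(N-3)}$ on the ones of $A'_{uv}$. This yields $\operatorname{perm}(A')\ge N!\bigl(\frac{(N-1)(N-3)}{N(N-2)}\bigr)^N$, which is about $0.134\,N!$ for $N\ge 400$ and again exceeds $N!/7.5$. Your closing mention of a ``correction'' term gestures at this but does not supply it, so as written the argument is incomplete.
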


\begin{proof}
    Let $H$ be the bipartite graph, with both parts of size $N$, whose edges
    correspond to the zeros of $A$. Thus every vertex of $H$ has degree at most
    $2$.

    We shall add edges to $H$, or equivalently change some additional ones of
    $A$ to zeros, while keeping all degrees in $H$ at most $2$. This can only
    decrease the permanent. Let $H'$ be a maximal graph obtained in this way,
    and let $A'$ be the corresponding matrix. It is enough to prove the desired
    lower bound for $\operatorname{perm}(A')$.

    We first record the structure of $H'$. Let $L_{<2}$ and $R_{<2}$ be the sets
    of vertices of degree less than $2$ in the two parts. By maximality, every
    $u\in L_{<2}$ is adjacent in $H'$ to every $v\in R_{<2}$; otherwise, we
    could add the edge $uv$. Hence 
    $|L_{<2}|,|R_{<2}|\le 1$. Moreover, the total deficiency from degree $2$ is the
    same on the two sides. Therefore, either $H'$ is $2$-regular, or there is
    exactly one vertex $u$ on the left and exactly one vertex $v$ on the right
    of degree $1$, and they are adjacent.

    In the first case, every row and every column of $A'$ contains exactly
    $N-2$ ones. By the Egorychev--Falikman theorem~\cite{E,F}, applied to
    $A'/(N-2)$, we have
    \[
        \operatorname{perm}(A')
        \geq
        (N-2)^N\frac{N!}{N^N}
        =
        \left(1-\frac{2}{N}\right)^N N!
        >
        \frac{1}{7.5}N!,
    \]
    where the last inequality holds, for instance, for all $N\geq 200$.

    It remains to consider the second case. Let $u$ and $v$ be the unique
    vertices of degree $1$ in the two parts. Then $uv$ is an edge of $H'$, so
    the entry $a'_{uv}$ is zero, while all other entries in row $u$ are equal to
    $1$. Let $A'_{uj}$ denote the matrix obtained from $A'$ by deleting row $u$
    and column $j$.

    Since column $v$ has no zeros except in row $u$, for every $j\neq v$ the
    matrix $A'_{uj}$ is obtained from $A'_{uv}$, up to a relabelling of columns,
    by replacing one column with an all-one column. Hence
    \[
        \operatorname{perm}(A'_{uj})
        \geq
        \operatorname{perm}(A'_{uv})
        \qquad\text{for every } j\neq v.
    \]
    Expanding the permanent along row $u$, we obtain
    \[
        \operatorname{perm}(A')
        =
        \sum_{j\neq v}\operatorname{perm}(A'_{uj})
        \geq
        (N-1)\operatorname{perm}(A'_{uv}).
    \]
    The matrix $A'_{uv}$ has size $(N-1)\times(N-1)$, and every row and every
    column contains exactly $N-3$ ones. Applying the Egorychev--Falikman theorem to $A'_{uv}$ and combining with the last displayed inequality  gives
    \[
        \operatorname{perm}(A')
        \geq
        (N-1)(N-3)^{N-1}
        \frac{(N-1)!}{(N-1)^{N-1}}
        =
        \frac{N-1}{N}
        \left(1-\frac{2}{N-1}\right)^{N-1}N!
        >
        \frac{1}{7.5}N!,
    \]
    where the last inequality holds, for instance, for all $N\geq 400$.

    Since $\operatorname{perm}(A)\geq \operatorname{perm}(A')$, the lemma
    follows.
\end{proof}

\begin{lemma} \label{ddspread}
    The family $\m D_{n,\overline{\sigma}}$ is
    $\left(\frac n{30},\frac n4\right)$-spread for $n\geq 600$.
\end{lemma}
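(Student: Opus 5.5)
We follow the template of Lemma~\ref{dspread}: compare $\m D_{n,\overline{\sigma}}(S)$ with $\Sigma_n(S)$ and then apply Lemma~\ref{spreadsubfamily} together with Lemma~\ref{sspread}. Here $\m D_{n,\overline{\sigma}}$ is the family of permutations avoiding both the identity and $\sigma$.

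Let $S$ be a partial permutation with $|S|\le n/4$ and $\m D_{n,\overline{\sigma}}(S)\neq\emptyset$. The key claim is
\[
|\m D_{n,\overline{\sigma}}(S)|>\tfrac{1}{7.5}(n-|S|)!=\tfrac{1}{7.5}|\Sigma_n(S)|.
\]
Granting this, Lemma~\ref{sspread} says $\Sigma_n(S)$ is $\frac n4$-spread. Lemma~\ref{spreadsubfamily} with $c=1/7.5$ then shows $\m D_{n,\overline{\sigma}}(S)$ is $\frac{n}{30}$-spread. Every $A$ with $|A|\le n/4$ is either such an $S$ or has empty link, which is trivially spread, so this proves the lemma.

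To prove the claim, set $N=n-|S|\ge 3n/4\ge 450\ge 400$, using $n\ge 600$. Let $R$ and $C$ be the rows and columns of $[n]\times[n]$ not covered by $S$, so $|R|=|C|=N$.

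1. Extensions of $S$ lying in $\m D_{n,\overline{\sigma}}$ correspond bijectively to perfect matchings of $R\times C$ avoiding the forbidden cells $\{(i,i)\}$ and $\{(i,\sigma(i))\}$. This uses that $S$ itself already avoids the identity and $\sigma$, which follows from $\m D_{n,\overline{\sigma}}(S)\ne\emptyset$.
2. Let $A$ be the $N\times N$ $(0,1)$-matrix on $R\times C$ whose zeros are exactly these forbidden cells. Then $|\m D_{n,\overline{\sigma}}(S)|=\operatorname{perm}(A)$.
3. Each row $i$ contains at most two forbidden cells, $(i,i)$ and $(i,\sigma(i))$. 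Each column $j$ likewise contains at most two, $(j,j)$ and $(\sigma^{-1}(j),j)$. So every row and column of $A$ has at least $N-2$ ones.
4. Lemma~\ref{permanent-min-degree} now gives $\operatorname{perm}(A)>N!/7.5$, which is the claim.

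The only delicate point is step 1: checking that restricting to $R\times C$ loses nothing and that the row and column counts in step 3 hold after deleting the rows and columns of $S$. Both are immediate, since forbidden cells in deleted rows or columns simply disappear and cannot raise any count.
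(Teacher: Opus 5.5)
Your proposal is correct and follows the paper's proof essentially step for step. You reduce, via Lemmas~\ref{sspread} and~\ref{spreadsubfamily} with $c=1/7.5$, to showing $|\m D_{n,\overline{\sigma}}(S)|>\frac{1}{7.5}(n-|S|)!$; you write this link as the permanent of an $N\times N$ $(0,1)$-matrix with at most two zeros in each row and column; and you apply Lemma~\ref{permanent-min-degree} using $N\ge 3n/4\ge 450$. Your remarks about empty links and about $S$ already avoiding the identity and $\sigma$ only spell out what the paper leaves implicit.
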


\begin{proof}
    By Lemma~\ref{spreadsubfamily}, it is enough to show that, for every
    partial permutation $S$ such that $s:=|S|\leq n/4$ and
    $\m D_{n,\overline{\sigma}}(S)\neq\emptyset$, we have
    \[
        |\m D_{n,\overline{\sigma}}(S)|
        \geq
        \frac{1}{7.5}|\Sigma_n(S)|.
    \]

    Put $N:=n-s$. After fixing the pairs of $S$, we delete the corresponding
    rows and columns. The number $|\m D_{n,\overline{\sigma}}(S)|$ is the
    permanent of an $N\times N$ $(0,1)$-matrix $A$. In each remaining row and
    each remaining column, at most two entries are forbidden: the identity
    entry and the $\sigma$-entry. Hence every row and every column of $A$
    contains at least $N-2$ ones.

    Since $s\leq n/4$ and $n\geq 600$, we have $N\geq 450$. Therefore, by
    Lemma~\ref{permanent-min-degree},
    \[
        |\m D_{n,\overline{\sigma}}(S)|
        =
        \operatorname{perm}(A)
        >
        \frac{1}{7.5}N!
        =
        \frac{1}{7.5}|\Sigma_n(S)|,
    \]
    as required.
\end{proof}

The following theorem is a spread approximation result adapted for families
with no matchings. It says that a family without an $s$-matching can be
approximated by a family of much lower uniformity, which also has no
$s$-matching.

\begin{theorem} \label{spreadapproxtheorem}
    Let $n,k,s,q$ be positive integers. Let $\m A\subset {[n]\choose k}$ and
    let $\FF\subset \m A$. Suppose that $\m A$ is $r$-spread for
    \[
        r>
        \max\{4(s-1)q,\; 2^9 s\log_2(2k)\}
    \]
    and that $\nu(\FF)<s$. Then there exist a family
    $\m S\subset {[n]\choose \leq q}$ with $\nu(\m S)<s$, and a family
    $\FF'\subset \FF$, such that the following hold:
    \begin{enumerate}[label=(\roman*)]
        \item $\FF\setminus \FF'\subset \m A[\m S]$;
        \item for every $B\in \m S$, there is a family $\FF_B\subset \FF$ such
        that $\FF_B(B)$ is $\frac r2$-spread;
        \item $|\FF'|\leq 2^{-q-1}|\m A|$.
    \end{enumerate}
\end{theorem}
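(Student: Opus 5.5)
We build $\m S$ and $\FF'$ by the usual iterative peeling procedure of spread approximations. Set $\FF_0:=\FF$. At step $i$, while $|\FF_i|>2^{-q-1}|\m A|$, apply Lemma~\ref{maxrelhom} with parameter $r/2$ to $\FF_i$: take an inclusion-maximal set $B_i$ with $|\FF_i(B_i)|\geq (r/2)^{-|B_i|}|\FF_i|$. Then $\FF_i(B_i)$ is $\frac r2$-spread. Put $\FF_{B_i}:=\FF_i[B_i]$, add $B_i$ to $\m S$, and set $\FF_{i+1}:=\FF_i\setminus \FF_i[B_i]$. When the process stops, let $\FF':=\FF_i$; this gives (iii). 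Every removed set contains some $B_j$, which gives (i). Property (ii) holds by construction.

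The first check is $|B_i|\leq q$. Since $\m A$ is $r$-spread,
\[
    (r/2)^{-|B_i|}|\FF_i|\leq |\FF_i(B_i)|\leq |\m A(B_i)|\leq r^{-|B_i|}|\m A|.
\]
Together with $|\FF_i|>2^{-q-1}|\m A|$, this gives $2^{|B_i|}<2^{q+1}$, so $|B_i|\leq q$. The procedure terminates because the $\FF_i$ strictly decrease: $B_i$ is contained in at least one set, since $|\FF_i(B_i)|>0$.

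The main step is $\nu(\m S)<s$. Suppose $B_1,\dots,B_s\in\m S$ are pairwise disjoint. We want disjoint $F_j\in\FF_{B_j}$ with $F_j\supset B_j$, which would contradict $\nu(\FF)<s$. Let $U:=\bigcup B_j$, so $|U|\leq sq$. Define the upward-closed families $\m H_j:=\{W: \exists F\in \FF_{B_j},\ F\supset B_j,\ F\setminus B_j\subset W\}$ on the ground set $[n]\setminus U$. To keep the sets away from $U$, first pass to
\[
    \m G_j:=\{G\in \FF_{B_j}(B_j): G\cap U=\emptyset\}.
\]
By $\frac r2$-spreadness, the number of $G\in\FF_{B_j}(B_j)$ hitting $U$ is at most $|U|\cdot\frac 2r|\FF_{B_j}(B_j)|\leq \frac{2sq}{r}|\FF_{B_j}(B_j)|\leq \frac 12|\FF_{B_j}(B_j)|$, using $r>4(s-1)q$; the slack in this bound should be checked carefully. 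So $|\m G_j|\geq \frac12|\FF_{B_j}(B_j)|$, and by Lemma~\ref{spreadsubfamily} $\m G_j$ is $\frac r4$-spread.

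Now apply Theorem~\ref{spreadtheorem} to $\m G_j\subset{[n]\choose\leq k}$ with $r':=r/4$. Choose $p=\beta\delta=\frac{1}{3s\cdot 2}$ or similar, with $\beta=\log_2(2k)$ and $\delta=p/\beta$, so that $r'\delta\geq 2^{9}s\log_2(2k)\cdot\frac{1}{4\cdot 6s\log_2(2k)}\geq 16$. Then $\frac{2}{\log_2(r'\delta)}\leq\frac12$, and the failure probability is at most $2^{-\log_2 2k}k=\frac12$. Hence a $p$-random $W$ contains some $G\in\m G_j$ with probability at least $\frac12\geq 3sp$, after fixing $p=1/(6s)$ and rechecking that $r'\delta\ge 16$ holds with the constant $2^9$. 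Lemma~\ref{lemkee} then yields pairwise disjoint $G_j\in\m G_j$. The sets $F_j:=B_j\cup G_j$ are pairwise disjoint, since the $B_j$ are disjoint, the $G_j$ avoid $U$, and the $G_j$ are mutually disjoint. This gives an $s$-matching in $\FF$, a contradiction.

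The main obstacle is the constant bookkeeping in this last step: matching $r>2^9s\log_2(2k)$ to the Spread Lemma and to the $3sp$ threshold of Lemma~\ref{lemkee}, after losing factors of $2$ and $4$ in spreadness.
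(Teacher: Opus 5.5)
Your proposal is the paper's proof. You use the same greedy peeling with inclusion-maximal sets (Lemma~\ref{maxrelhom}) and the same size bound from the $r$-spreadness of $\m A$. Your stopping rule, $|\FF_i|\le 2^{-q-1}|\m A|$ followed by deducing $|B_i|\le q$, is just the contrapositive of the paper's rule of stopping when $|S_m|>q$. The same Spread Lemma plus Lemma~\ref{lemkee} argument then gives $\nu(\m S)<s$.

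The step you flagged does need a fix. If you delete from $\FF_{B_j}(B_j)$ everything meeting $U$, with $|U|\le sq$, you lose up to a fraction $\frac{2sq}{r}$. The hypothesis $r>4(s-1)q$ does not make this at most $\frac12$. For instance, when $s=2$ and $r$ is just above $4q$, the fraction is close to $1$, and you get no control on $|\m G_j|$ or its spreadness.

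The fix is the one the paper uses. Every $G\in\FF_{B_j}(B_j)$ has the form $F\setminus B_j$, so it already misses $B_j$. You therefore only need to exclude the at most $(s-1)q$ elements of $U\setminus B_j$ (the paper's $Z_i=Z\setminus T_i$). This costs at most $\frac{2(s-1)q}{r}<\frac12$, so $\m G_j$ is $\frac r4$-spread as you wanted.

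With that change the rest goes through, including your choice $p=1/(6s)$. You get $\frac r4\delta>2^7/6>16$, hence success probability greater than $\frac12=3sp$, which Lemma~\ref{lemkee} allows. The paper takes $p=1/(8s)$ only to have some slack.
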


\begin{proof}
    We construct $\m S$ by the following greedy procedure. Start with
    $\FF^1:=\FF$. At step $i$, if $\FF^i=\emptyset$, stop. Otherwise, choose an
    inclusion-maximal set $S_i$ such that
    \[
        |\FF^i(S_i)|
        \geq
        (r/2)^{-|S_i|}|\FF^i|.
    \]
    If $|S_i|>q$, stop. Otherwise, put
    \[
        \FF^{i+1}:=\FF^i\setminus \FF^i[S_i].
    \]

    By Lemma~\ref{maxrelhom}, the family $\FF^i(S_i)$ is
    $\frac r2$-spread. Let $m$ be the step at which the procedure stops, and put
    \[
        \m S:=\{S_1,\ldots,S_{m-1}\}.
    \]
    Clearly, $|S_i|\leq q$ for each $i\in[m-1]$. For $B=S_i$, define
    $\FF_B:=\FF^i[S_i]$. Then $\FF_B(B)=\FF^i(S_i)$, and so (ii) holds.

    We now define $\FF'$. If the procedure stops because $\FF^m=\emptyset$, put
    $\FF':=\emptyset$. Otherwise, it stops because $|S_m|>q$, and we put
    $\FF':=\FF^m$. In this case,
    \[
        |\FF^m|
        \leq
        (r/2)^{|S_m|}|\FF^m(S_m)|
        \leq
        (r/2)^{|S_m|}|\m A(S_m)|
        <
        (r/2)^{|S_m|}r^{-|S_m|}|\m A|
        =
        2^{-|S_m|}|\m A|.
    \]
    Since $|S_m|>q$, this gives
    \[
        |\FF'|\leq 2^{-q-1}|\m A|.
    \]
    Moreover, by construction,
    \[
        \FF\setminus \FF'\subset \m A[\m S].
    \]
    Thus (i) and (iii) hold.

    It remains to verify that $\nu(\m S)<s$. Suppose, to the contrary, that
    there are pairwise disjoint sets $T_1,\ldots,T_s\in\m S$. Put
    \[
        Z:=\bigcup_{j=1}^s T_j,
        \qquad
        Z_i:=Z\setminus T_i.
    \]
    Note that $|Z_i|\leq (s-1)q$. Recall that each family
    $\FF_{T_i}(T_i)$ is $\frac r2$-spread.

    Define $\m G_i:=\{A\in \FF_{T_i}(T_i): A\cap Z_i=\emptyset\}$. Then
    \[
        |\m G_i|
        \geq
        |\FF_{T_i}(T_i)|
        -
        \sum_{x\in Z_i}|\FF_{T_i}(T_i\cup\{x\})|.
    \]
    Using the $\frac r2$-spreadness of $\FF_{T_i}(T_i)$, we get
    \[
        |\m G_i|
        \geq
        \left(1-\frac{|Z_i|}{r/2}\right)|\FF_{T_i}(T_i)|
        \geq
        \frac12|\FF_{T_i}(T_i)|,
    \]
    because $r/2\geq 2(s-1)q\geq 2|Z_i|$. Therefore, by
    Lemma~\ref{spreadsubfamily}, each $\m G_i$ is $\frac r4$-spread.

    We now apply Theorem~\ref{spreadtheorem}. Put
    \[
        \beta:=\log_2(2k),
        \qquad
        \delta:=\frac{1}{8s\log_2(2k)}.
    \]
    Then $\beta\delta=\frac{1}{8s}$. Moreover,
    $
        \frac r4\delta\geq 2^4,
    $
    by our assumption $r\geq 2^9s\log_2(2k)$. Hence Theorem~\ref{spreadtheorem}
    implies that a $\frac{1}{8s}$-random subset $W$ of $[n]\setminus Z$
    contains a member of $\m G_i$ with probability at least
    \[
        1-
        \left(\frac{2}{\log_2 2^4}\right)^{\log_2(2k)}k
        =
        1-2^{-\log_2(2k)}k
        =
        \frac12.
    \]

    Let $\m H_i$ be the upward closure of $\m G_i$ in $2^{[n]\setminus Z}$.
    Then the above implies that
    \[
        \Pr[W\in \m H_i]\geq \frac12.
    \]
    Since $p:=1/(8s)$ satisfies $3sp=3/8<1/2$, we may apply
    Lemma~\ref{lemkee}. We obtain pairwise disjoint sets
    $A_1\in\m H_1,\ldots,A_s\in\m H_s$. Hence there are pairwise disjoint sets
    $B_1\in\m G_1,\ldots,B_s\in\m G_s$.
    But then
    $
        T_1\cup B_1,\ldots,T_s\cup B_s
    $
    are pairwise disjoint members of $\FF$, contradicting $\nu(\FF)<s$.
    Therefore $\nu(\m S)<s$, as required.
\end{proof}

The next theorem and corollary can be understood as follows: if a family has a
non-trivial approximation $\m S$, then $\m A[\m S]$ is significantly smaller
than a trivial union of stars.

Here, we say that $\m S$ is \emph{trivial} if it consists of singletons only. A family $\m S$ with $\nu(\m S) < s$ is {\it maximal} if for any  $A\in \m S$ and $B\subsetneq A$, the family $\m S':=\m S\setminus \{A\}\cup \{B\}$ satisfies $\nu(\m S')\ge s$.

\begin{theorem}[\cite{FK}, Theorem~16] \label{nontrivialsimple}
    Let $\varepsilon\in(0,1]$ and let $n,r,q\geq 1$, $s\geq 2$, be integers
    such that
    \[
        \varepsilon r\geq 8e(s-1)q.
    \]
    Let $\m A\subset 2^{[n]}$ be $(r,1)$-spread, and let
    $\m S\subset {[n]\choose \leq q}$ be a maximal non-trivial family with
    $\nu(\m S)<s$. Suppose that $\m S$ contains exactly $l$ singleton members
    $x_1,\ldots,x_l\in[n]$. Then there exists an element $x\in[n]$ such that
    \[
        |\m A[\m S]|
        \leq
        \left|\bigcup_{j=1}^l \m A[x_j]\right|
        +
        \varepsilon(s-1-l)|\m A[x]|.
    \]
\end{theorem}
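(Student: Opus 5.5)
I plan to run a greedy "peeling" argument, using the maximality of $\m S$ to find a structural element, and the $(r,1)$-spreadness to bound each peeled piece. Write $\m S=\{\{x_1\},\dots,\{x_l\}\}\cup \m S_2$, where $\m S_2$ consists of the members of size at least $2$. Since $\m S$ is non-trivial, $\m S_2\neq\emptyset$. The singletons $x_j$ do not lie in any member of $\m S_2$; otherwise that member could be shrunk to a singleton without changing the matching number. Consequently $\m S_2$ lives on $[n]\setminus\{x_1,\dots,x_l\}$. Moreover, $\nu(\m S_2)<s-l$, because any matching in $\m S_2$ can be extended by the $l$ singletons. It therefore suffices to show
\[
|\m A[\m S_2]\setminus \textstyle\bigcup_j \m A[x_j]|\le |\m A[\m S_2]|\le \varepsilon (s-1-l)|\m A[x]|,
\]
with $x$ chosen as the element of maximum degree $|\m A[x]|$. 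Here $\m A[\m S]=\bigcup_j\m A[x_j]\cup \m A[\m S_2]$.

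To bound $|\m A[\m S_2]|$, I take a maximal matching in $\m S_2$, of size at most $s-l-1$, and let $T$ be its union, so $|T|\le (s-1-l)q$. Every member of $\m S_2$ meets $T$. For $B\in\m S_2$ and $y\in B\cap T$, pick $z\in B\setminus\{y\}$, which exists since $|B|\ge 2$. Then $\m A[B]\subset \m A[\{y,z\}]$. The $(r,1)$-spreadness gives $|\m A[\{y,z\}]|\le r^{-1}|\m A[y]|\le r^{-1}|\m A[x]|$. The difficulty is that summing over all possible $z$ would cost a factor of $n$. So I need to use maximality of $\m S$ more cleverly to limit the number of relevant pairs $\{y,z\}$.

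The key step, and the main obstacle, is this: show that $\m S_2$ has a cover by a bounded number of pairs, or bound $|\m A[\m S_2]|$ through an iterative argument. My plan is the standard Frankl--Kupavskii trick. Maximality says that for each $B\in\m S_2$ and each $y\in B$, replacing $B$ by $B\setminus\{y\}$ creates an $s$-matching. Hence there exist $s-1$ pairwise disjoint members of $\m S$ that avoid $B\setminus\{y\}$ and all meet $y$-free positions. In particular, after fixing $B$, there are $s-1$ pairwise disjoint sets in $\m S\setminus\{B\}$ disjoint from $B\setminus\{y\}$, hence of total size at most $(s-1)q$, and each must meet $B$ — necessarily at $y$ — since $\nu(\m S)<s$. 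I use this to bound $\m A[\m S_2]$ by an expectation. For a uniformly random $F\in\m A$ containing some member of $\m S_2$, a union bound over the at most $(s-1-l)q$ elements $y\in T$, combined with spreadness applied to the second element drawn from a bounded witness set of size at most $(s-1)q$ attached to $y$, gives
\[
|\m A[\m S_2]|\le \sum_{y\in T}\ (s-1)q\cdot e\, r^{-1}|\m A[y]|\le (s-1-l)q\cdot\frac{e(s-1)q}{r}\,|\m A[x]|.
\]
The factor $e$ accounts for the conditioning in the $(r,1)$-spread definition. Finally, using $\varepsilon r\ge 8e(s-1)q$ absorbs the factors. If a surplus factor of $q$ remains, I would instead apply the witness argument with sets of size $q$, so that only $(s-1)q$ elements appear in total; this makes the bound $\varepsilon(s-1-l)|\m A[x]|/8$.

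I expect the delicate part to be making the "bounded number of second elements $z$ per $y$" rigorous via maximality. If it fails directly, the fallback is to iterate the shrinking argument: replace $\m S_2$ by its minimal transversal structure and induct on $s-l$.
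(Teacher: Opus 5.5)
Your reduction is correct: singletons of $\mathcal{S}$ cannot lie in members of $\mathcal{S}_2$, and $\nu(\mathcal{S}_2)\le s-1-l$, so it suffices to bound $|\mathcal{A}[\mathcal{S}_2]|$ by $\varepsilon(s-1-l)\max_x|\mathcal{A}[x]|$. The estimate itself, however, has a genuine gap.

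First, the witness set is derived incorrectly. Shrinking $B$ to $B\setminus\{y\}$ gives $s-1$ pairwise disjoint members of $\mathcal{S}$ avoiding $B\setminus\{y\}$, and exactly one of them (not all) contains $y$. Their union contains $y$ and depends on $B$, so it yields neither a second element $z$ nor a set attached to $y$. The right move is to shrink $B$ to a nonempty $Y\subsetneq B$ (e.g.\ $Y=\{y\}$). Maximality then forces $Y\notin\mathcal{S}$ and gives an $(s-1)$-matching $\mathcal{M}(Y)\subset\mathcal{S}$ whose members all avoid $Y$. Fix one such matching for each $Y$; its union $W(Y)$ has at most $(s-1)q$ elements, and every member of $\mathcal{S}$ containing $Y$ must meet $W(Y)$, necessarily outside $Y$.

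Second, and fatally, even after this repair your depth-two union bound only gives
\[
|\mathcal{A}[\mathcal{S}_2]|\le |T|\,(s-1)q\,r^{-1}\max_x|\mathcal{A}[x]|\le \frac{(s-1-l)(s-1)q^2}{r}\max_x|\mathcal{A}[x]|\le \frac{\varepsilon q}{8e}(s-1-l)\max_x|\mathcal{A}[x]|.
\]
This is off by a factor of order $q$: you would need $\varepsilon r$ of order $(s-1)q^2$, not $(s-1)q$. The loss is not a matter of constants. For $s=2$ and $\mathcal{S}=\binom{[2q-1]}{q}$, both $T$ and the witness sets really have $q$ elements, so your bound is of order $q^2/r$. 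The true size is at most $\binom{2q-1}{q}r^{-(q-1)}\max_x|\mathcal{A}[x]|$. Since $q=\lfloor 4\log n\rfloor$ in the applications, the extra factor cannot be absorbed. Re-running the witness argument with sets of size $q$ is not an argument, and no factor $e$ arises from conditioning in the $(r,1)$-spread definition.

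The missing idea is to stratify by size and branch to full depth, so that spreadness is applied to all of $B$ and produces geometric decay. For $j\ge 2$, let $\mathcal{S}^{(j)}$ be the $j$-element members of $\mathcal{S}_2$. Let $T_j$ be the union of a maximal matching inside $\mathcal{S}^{(j)}$, so $|T_j|\le j(s-1-l)$ rather than $q(s-1-l)$. Each $B\in\mathcal{S}^{(j)}$ is recovered by choosing $y_1\in B\cap T_j$ and then, for $i<j$, some $y_{i+1}\in B\cap W(\{y_1,\dots,y_i\})$. Because $W(\cdot)$ depends only on the prefix, this gives $|\mathcal{S}^{(j)}|\le j(s-1-l)\bigl((s-1)q\bigr)^{j-1}$. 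Now combine this with $|\mathcal{A}[B]|\le r^{-(|B|-1)}\max_x|\mathcal{A}[x]|$, which follows from $(r,1)$-spreadness, and with $j\le e^{j-1}$:
\[
|\mathcal{A}[\mathcal{S}_2]|\le (s-1-l)\max_x|\mathcal{A}[x]|\sum_{j\ge 2}\Bigl(\frac{e(s-1)q}{r}\Bigr)^{j-1}\le (s-1-l)\max_x|\mathcal{A}[x]|\sum_{j\ge 2}\Bigl(\frac{\varepsilon}{8}\Bigr)^{j-1}<\varepsilon(s-1-l)\max_x|\mathcal{A}[x]|.
\]
This is exactly where the constant $8e$ in the hypothesis comes from. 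The paper only quotes this theorem from Frankl and Kupavskii and gives no proof, so this is an argument that fits the stated hypothesis rather than a comparison with the paper's text.
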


\begin{corollary} \label{corsimple}
    Let $x\in[n]$ be such that $|\m A[x]|$ is maximal. In the notation of
    Theorem~\ref{nontrivialsimple}, if $\m S$ is non-trivial and
    $\nu(\m S)<s$, then
    \[
        |\m A[\m S]|
        \leq
        (s-2+\varepsilon)|\m A[x]|.
    \]
\end{corollary}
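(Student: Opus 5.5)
The plan is to apply Theorem~\ref{nontrivialsimple} directly and then bound every term on its right-hand side by a multiple of $|\m A[x]|$, where $x$ is the element of maximum degree. As the phrase ``in the notation of Theorem~\ref{nontrivialsimple}'' indicates, we work under that theorem's hypotheses: $\varepsilon r\ge 8e(s-1)q$, $\m A$ is $(r,1)$-spread, and $\m S\subset{[n]\choose\le q}$ is a \emph{maximal} non-trivial family with $\nu(\m S)<s$ containing exactly $l$ singletons $x_1,\dots,x_l$. Theorem~\ref{nontrivialsimple} then gives some $x'\in[n]$ with
\[
|\m A[\m S]|\le \Bigl|\bigcup_{j=1}^l \m A[x_j]\Bigr|+\varepsilon(s-1-l)|\m A[x']|.
\]

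The first step is to show $l\le s-2$. The singletons $\{x_1\},\dots,\{x_l\}$ are pairwise disjoint, so $l\le s-1$. Suppose $l=s-1$. Since $\m S$ is non-trivial, it has a member $B$ with $|B|\ge 2$. If $B$ avoided all $x_j$, then $B,\{x_1\},\dots,\{x_{s-1}\}$ would be an $s$-matching in $\m S$, which is impossible. Hence $x_j\in B$ for some $j$. Now replace $B$ by the proper subset $\{x_j\}$. The resulting family is $\m S\setminus\{B\}$, because $\{x_j\}$ is already a member. Its matching number is still less than $s$, since it is a subfamily of $\m S$. This contradicts the maximality of $\m S$. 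So $l\le s-2$.

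The second step is the union bound together with the choice of $x$. We have $|\bigcup_j\m A[x_j]|\le l\,|\m A[x]|$ and $|\m A[x']|\le|\m A[x]|$, so
\[
|\m A[\m S]|\le \bigl(l+\varepsilon(s-1-l)\bigr)|\m A[x]|=\bigl((s-1)-(1-\varepsilon)(s-1-l)\bigr)|\m A[x]|.
\]
Since $\varepsilon\le 1$, the coefficient is nondecreasing in $l$. Using $l\le s-2$, it is at most $s-2+\varepsilon$, which is the claimed bound.

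The only delicate point is the first step, namely that maximality rules out $l=s-1$. One should not attempt to first replace an arbitrary non-trivial $\m S$ by a maximal one. Shrinking members only enlarges $\m A[\m S]$, but it could make the family trivial, so the maximality and non-triviality hypotheses of Theorem~\ref{nontrivialsimple} are used exactly as stated.
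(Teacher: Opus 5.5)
Your proof is correct and follows the paper's own argument. You apply Theorem~\ref{nontrivialsimple}, note $l\le s-2$, bound both the union of the $l$ stars and the error term by $|\m A[x]|$ for the maximum-degree $x$, and use $\varepsilon\le 1$ so that the coefficient is largest at $l=s-2$. Your explicit use of maximality to rule out $l=s-1$ is a useful sharpening: the paper asserts $l\le s-2$ from non-triviality alone, and that would fail for a non-maximal family such as $\{\{x_1\},\dots,\{x_{s-1}\},\{x_1,y\}\}$.
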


\begin{proof}
    Since $\m S$ is non-trivial, it contains at most $s-2$ singleton members.
    Therefore, in Theorem~\ref{nontrivialsimple} we have $l\leq s-2$. Taking
    $x$ so that $|\m A[x]|$ is maximal, we get
    \[
        |\m A[\m S]|
        \leq
        l|\m A[x]|+\varepsilon(s-1-l)|\m A[x]|
        \leq
        (s-2+\varepsilon)|\m A[x]|.
    \]
\end{proof}

\section{Proof of Theorem~\ref{emc}} \label{sect_emc}

To prove Theorem~\ref{emc} and~\ref{emcder}, we need the following lemma. We state it in a rather general form, but with some prescribed functions for spreadness and the remainder. We note that those could be adjusted depending on the application. We will apply the lemma  for the ambient families being derangements and double derangements.

\begin{lemma} \label{lemc}
    Let $n,s$ be integers satisfying
    $
        s \leq \frac{n}{2^{17}\log n}.
    $
    Let $\m X\subset \Sigma_n$ be a family of permutations that is $(n/100, 1)$-spread. Let $\FF \subset \m X$ be a family that satisfies
    $\nu(\FF)<s$. Let $\m Y\subset \m X$ be the largest subfamily of all permutations containing one of $(s-1)$ fixed elements. Then
    \[
        |\FF| \leq |\m Y|+n^{-4}|\m X|.
    \]
\end{lemma}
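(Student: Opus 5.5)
Apply Theorem~\ref{spreadapproxtheorem} with ambient family $\m A:=\m X$, viewed as a subfamily of ${[n]^2\choose n}$ (so the ground set has size $n^2$ and $k=n$), and with $q:=\lceil 4\log_2 n\rceil$. The spreadness needed is only $r=n/100$, which is what $(n/100,1)$-spreadness gives with $A=\emptyset$. The hypothesis $r>\max\{4(s-1)q,\,2^9s\log_2(2n)\}$ follows from $s\le n/(2^{17}\log n)$: both $4(s-1)q$ and $2^9 s\log_2(2n)$ are $O(s\log n)$, with constants far below $2^{17}/100$. The theorem yields $\m S\subset{[n]^2\choose\le q}$ with $\nu(\m S)<s$ and a remainder $\FF'$ with $|\FF'|\le 2^{-q-1}|\m X|\le \tfrac12 n^{-4}|\m X|$. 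Since $\FF\setminus\FF'\subset\m X[\m S]$, it suffices to show $|\m X[\m S]|\le|\m Y|+\tfrac12 n^{-4}|\m X|$.

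Next replace $\m S$ by a maximal family: repeatedly shrink members while $\nu<s$ is preserved. This only enlarges $\m X[\m S]$ and keeps sizes $\le q$. If the resulting $\m S$ is trivial, it consists of at most $s-1$ singletons, i.e.\ pairs. In that case $\m X[\m S]$ is contained in the set of permutations of $\m X$ containing one of at most $s-1$ fixed elements, so $|\m X[\m S]|\le|\m Y|$ by the definition of $\m Y$ as the largest such union.

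If $\m S$ is non-trivial, apply Corollary~\ref{corsimple} with $\m A=\m X$ (which is $(r,1)$-spread) and $\varepsilon:=8e(s-1)q/r$. This $\varepsilon$ is $O(s\log n/n)$, hence small, say $\le 1/2$. The corollary gives $|\m X[\m S]|\le (s-2+\varepsilon)|\m X[x]|$, where $x$ is a pair of maximal degree. We then compare with $\m Y$. Take the $s-1$ pairs $(x_0,y_1),\dots,(x_0,y_{s-1})$ in a fixed row, chosen as the $s-1$ heaviest entries of the row containing the maximum-degree pair. These stars are pairwise disjoint, so $|\m Y|\ge\sum_i|\m X[(x_0,y_i)]|$.

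The main obstacle is that these degrees need not be comparable to the maximum degree. To handle this, I would use averaging: in every row $\sum_y|\m X[(x_0,y)]|=|\m X|$, and $(r,1)$-spreadness gives $|\m X[x]|\le 100|\m X|/n$. Combining these to get $|\m Y|\ge (s-2+\varepsilon)\max_x|\m X[x]|$ may still fail if the degrees are very uneven.

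An alternative is to avoid the comparison altogether by taking $\m Y$ from the approximation itself. Let $l\le s-2$ be the number of singletons $x_1,\dots,x_l$ in $\m S$. Theorem~\ref{nontrivialsimple} gives $|\m X[\m S]|\le|\bigcup_{j\le l}\m X[x_j]|+\varepsilon(s-1-l)|\m X[x]|$. We then need $s-1-l$ further pairs, disjoint from $x_1,\dots,x_l$, whose stars add at least $\varepsilon|\m X[x]|$ each beyond the union. Since there are at least $n-2(s-2)$ choices and the stars overlap only slightly (by $(r,1)$-spreadness of each $\m X(\{x_j\})$), averaging over a row avoiding $x_1,\dots,x_l$ should provide such pairs with degree at least $|\m X|/(2n)\ge \varepsilon\cdot 100|\m X|/n$, provided $\varepsilon\le 1/200$. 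This holds under $s\le n/(2^{17}\log n)$. I expect this comparison step to be where the care is needed.
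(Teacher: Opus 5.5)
Your final route is the paper's. It uses the spread approximation with $q\approx 4\log_2 n$, and handles the trivial case directly from the definition of $\m Y$. In the non-trivial case it applies Theorem~\ref{nontrivialsimple} itself; you are right that Corollary~\ref{corsimple} is useless when degrees are uneven. It then combines $\max_v|\m X[v]|\le 100|\m X|/n$ with an argument that $\m Y$ beats $\bigcup_{j\le l}\m X[x_j]$ by about $|\m X|/n$ per missing singleton. Your explicit passage to a maximal $\m S$ is a step the paper omits, even though Theorem~\ref{nontrivialsimple} assumes it.

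For the extension step the paper argues greedily rather than in a fixed row. If $\m Z$ is the current union of at most $s-2$ stars, then $|\m X\setminus\m Z|\ge(1-100s/n)|\m X|\ge 0.99|\m X|$, so averaging over any row of $\m X\setminus\m Z$ gives a star adding at least $0.99|\m X|/n$. This needs no overlap estimates and leaves a factor-two margin over the loss $\frac1{200}\cdot\frac{100}{n}|\m X|=\frac{|\m X|}{2n}$. Your threshold $|\m X|/(2n)$ leaves no margin once the overlaps $(s-2)(100/n)^2|\m X|$ are subtracted. Using threshold $0.9|\m X|/n$ instead fixes this: a row whose entries are all at most $100|\m X|/n$ has more than $n/1000$ entries of that size.

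One numerical claim of yours is false, and the paper's ``the condition on $r$ is readily checked'' has the same defect. Theorem~\ref{nontrivialsimple} requires $\varepsilon r\ge 8e(s-1)q$. With $r=n/100$, $q\approx 4\log_2 n$ and $s$ close to $n/(2^{17}\log n)$, the smallest admissible $\varepsilon$ is about $3200e/2^{17}\approx 0.066$ (about $0.096$ if $\log=\ln$), not at most $1/200$. The loss term can then be $100\varepsilon|\m X|/n\approx 7|\m X|/n$ per missing singleton, while only about $|\m X|/n$ of gain per added star is guaranteed. So for a general $(n/100,1)$-spread $\m X$, neither your argument nor the paper's closes with the constant $2^{17}$. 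There are two repairs:
\begin{itemize}
\item replace $2^{17}$ by roughly $2^{21}$; or
\item for the two applications actually needed, use sharper degree bounds. These are $\max_v|\m D_n[v]|=d_{n,1}=d_n/(n-1)$ for derangements, and $|\m X[v]|\le (n-1)!\le 7.5|\m X|/n$ for double derangements by Lemma~\ref{permanent-min-degree}. With these the loss stays below the $0.99|\m X|/n$ gain even for $\varepsilon\approx 0.1$.
\end{itemize}
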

Note that this lemma itself gives an approximate variant of Theorems~\ref{emc} and~\ref{emcder}.
\begin{proof}
  
    We apply Theorem~\ref{spreadapproxtheorem} to $\FF$ with
    $q=\lfloor 4\log n\rfloor$. It is easy to see that the requirements on $r$ from Theorem~\ref{spreadapproxtheorem} are satisfied, notably $\frac n{100}>2^9s\log_2(2n)$.  This gives a family
    $\m S\subset {[n]^2 \choose \leq q}$ with $\nu(\m S)<s$ and a subfamily
    $\FF'\subset \FF$ such that
    \[
        \FF\setminus \FF' \subset \m X[\m S]
        \qquad\text{and}\qquad
        |\FF'|\leq n^{-4}|\m X|.
    \]
    It remains to bound $|\m X[\m S]|$.

    Assume first that $\m S$ is trivial. Then $|\m X[\m S]|\le |\m Y|$ by the definition of $\m Y$. Assume $\m S$ is non-trivial and contains $l\le s-2$ singletons. We then apply 
    Theorem~\ref{nontrivialsimple} with $\varepsilon=1/200$ (again, the condition on $r$ is readily checked), and get 
    $$
        |\m F[\m S]|
        \leq
        |\m Y_l|+\frac 1{200}(s-l-1)\max_{v}|\m X[v]|,
    $$
where $\m Y_l\subset X$ is the largest family of permutations from $X$ containing one of some fixed $l$ singletons. Let us show that the right-hand side is at most $|\m Y|$. First, we note that, by $n/100$-spreadness, $\max_{v}|\m X[v]|\le \frac{100}n|\m X|,$ and thus the second term in the summation is upper bounded by $\frac 1{2n} (s-l-1)|\m X|$. Next, let us lower bound $|\m Y_{l+1}|-|\m Y_l|$. By spreadness, we have 
$$
|\m X\setminus \m Y_l|\ge \Big(1-\frac{s}{n/100}\Big)|\m X|\ge 0.99 |\m X|,
$$
where the second inequality is in our assumption on $s$. Thus, via averaging, we can find a singleton $v$, such that $|(\m X\setminus \m Y_l)(v)|\ge \frac 1n |\m X\setminus \m Y_l|\ge \frac{0.99}n |\m X|.$ Adding all permutations containing $v$ to $\m Y_l$, we get that $|\m Y_{l+1}|-|\m Y_l|\ge \frac{0.99}n |\m X|.$ Consequently, $|\m Y|-|\m Y_l|\ge \frac{0.99}n(s-l-1)|\m X|,$ and we conclude that in the case when $\m S$ is non-trivial we have 
$$|\m F[\m S]|< |\m Y|.$$
    Finally, in both cases
    \[
        |\FF|
        \leq
        |\m Y|+|\FF'|
        \leq
        |\m Y|+n^{-4}|\m X|.
    \]
\end{proof}

We are now ready to prove Theorem~\ref{emc}.

\begin{proofw}
    \underline{\emph{Proof of Theorem~\ref{emc}}}.

Take a family $\m F$ of size $(s-1)(n-1)!$.
    Recall that the ambient family $\Sigma_n$ is
    $\left(\frac n4,\frac n4\right)$-spread by Lemma~\ref{sspread}.
    We apply Theorem~\ref{spreadapproxtheorem} to $\FF$ with
    $q=\lfloor 4\log n\rfloor$. It is easy to see that the requirements on $r$ from Theorem~\ref{spreadapproxtheorem} are satisfied, notably $\frac n{4}>2^9s\log_2(2n)$.  This gives a family
    $\m S$ with $\nu(\m S)<s$ and a subfamily $\FF'\subset \FF$ such that
    \[
        \FF\setminus \FF' \subset \Sigma_n[\m S]
        \qquad\text{and}\qquad
        |\FF'|\leq \frac{1}{n^4}n!.
    \]

    Suppose first that $\m S$ is non-trivial. Applying
    Corollary~\ref{corsimple} with $\varepsilon=\frac12$, we obtain
    \[
        |\Sigma_n[\m S]|
        \leq
        \left(s-\frac32\right)(n-1)!.
    \]
    Hence
    \[
        |\FF|
        \leq
        |\Sigma_n[\m S]|+|\FF'|
        \leq
        \left(s-\frac74\right)(n-1)!+\frac{1}{n^4}n!
        <
        (s-1)(n-1)!,
    \]
    a contradiction.

    Thus $\m S$ is trivial. Hence there exist pairs
    $
        (x_1,y_1),\ldots,(x_t,y_t)\in [n]^2,$ $
        t\leq s-1,
    $
    such that
    \[
        \Sigma_n[\m S]
        \subset
        \bigcup_{i=1}^{t}\Sigma_n[(x_i,y_i)]:=\m U.
    \]
    We claim that the stars in this union are pairwise disjoint. Indeed, if
    \[
        \Sigma_n[(x_i,y_i)]\cap \Sigma_n[(x_j,y_j)]\neq\emptyset
    \]
    for some $i\neq j$, then this intersection has size at least $(n-2)!$.
    Therefore
    \[
        |\m U|
        \leq
        (s-1)(n-1)!-(n-2)!.
    \]
    Since $|\FF'|\leq n!/n^4 < (n-2)!$, we get
    \[
        |\FF|
        \leq
        |\m U|+|\FF'|
        <
        (s-1)(n-1)!,
    \]
    a contradiction. Similarly, we must have $t=s-1.$    

    We now show that $\FF\subset \m U$. Suppose, to the contrary, that there
    exists a permutation $\sigma\in\FF\setminus \m U$. Equivalently,
    $
        \sigma(x_i)\neq y_i$ $
        \text{for all } i\in[s-1].
    $
    Let $\m A$ be the subfamily of $\FF\setminus \FF'$ consisting of all
    permutations that do not intersect $\sigma$. Then
    $
        \m A\subset \Sigma_{n,\overline{\sigma}}.
    $
    Moreover, $\nu(\m A)<s-1$, because otherwise an $(s-1)$-matching in
    $\m A$, together with $\sigma$, would form an $s$-matching in $\FF$.

    Note that $\Sigma_{n,\overline{\sigma}}$ is isomorphic to $\m D_n$, and  the family $\m D_n$ is
    $\left(\frac n{12},\frac n4\right)$-spread by Lemma~\ref{dspread}. Thus, we may
    apply Lemma~\ref{lemc} to $\m A$. We note that $|\m Y| = (s-2)d_{n,1}$ in this case. Thus
    \[
        |\m A|
        \leq
        (s-2)d_{n,1}+\frac{1}{n^4}d_n.
    \]

    Put
    \[
        \m U_\sigma
        :=
        \bigcup_{i=1}^{s-1}
        \Sigma_{n,\overline{\sigma}}[(x_i,y_i)].
    \]
    Since the stars $\Sigma_n[(x_i,y_i)]$ are pairwise disjoint and
    $\sigma(x_i)\neq y_i$ for all $i$, the families
    $\Sigma_{n,\overline{\sigma}}[(x_i,y_i)]$ are also pairwise disjoint, and
    each has size $d_{n,1}$. Hence $
        |\m U_\sigma|=(s-1)d_{n,1}.
    $
    Therefore
    \[
        |\m U\setminus (\m F\setminus \m F')|\ge |\m U_\sigma\setminus \m A|
        \geq   d_{n,1}-\frac{1}{n^4}d_n.
    \]
    On the other hand, $\FF\setminus \m U\subset \FF'$, and so
    \[
        |\FF|
        \le 
        |(\FF\setminus \FF')\cap \m U|+|\FF'|
        \leq
        |\m U|
        -
        \left(d_{n,1}-\frac{d_n}{n^4}\right)
        +
        \frac{n!}{n^4} <|\m U|,
    \]
    a contradiction.   Thus
    $\FF=\m U$.

    Finally, pairwise disjointness of the stars means that, for every
    $i\neq j$, either        $x_i=x_j$ and $y_i\neq y_j$, or vice versa. 
\end{proofw}
{\bf Remark. } Let us discuss the proof of Theorem~\ref{emcder}. We run the same argument as in the proof of Theorem~\ref{emc}. The differences are mostly in calculations and the bounds on $r$ required for the application of the spread approximation machinery. The most significant difference is the definition of the families $\m A$ and $\m U_{\sigma}$. Since the family $\m F$ is the family of derangements, the families $\m A, \m U_{\sigma}$ become the family of double derangements (containing one of $(s-1)$ fixed elements). We proved bounds on the size  of the family of double derangements in Lemmas~\ref{permanent-min-degree},~\ref{ddspread}.  Thus we can similarly apply Lemma~\ref{lemc} to $\m A$ and get a non-trivial upper bound on its size. The rest of the proof is the same.

\section{Proof of Theorem~\ref{hm}} \label{sect_stab}

Let $\FF_1,\ldots,\FF_t$ be families in ${[n]\choose k}$. We say that
$\FF_1,\ldots,\FF_t$ contain a \emph{cross $t$-matching} if there exist pairwise
disjoint sets $F_1\in\FF_1,\ldots,F_t\in\FF_t$. We write
$f(n)\lesssim g(n)$ if $\limsup_{n\to\infty} f(n)/g(n)<1$.

To prove Theorem~\ref{hm}, we need the following lemma.

\begin{lemma} \label{cross-matching}
Let $n,t$ be positive integers satisfying $t\le n/(2^{17}\log n)$. Let
$\FF_1,\ldots,\FF_t$ be families of permutations such that
$\FF_i\subset \m D_n[(x_i,y_i)]$ for all $i$, where the pairs
$(x_i,y_i)\in[n]^2$ are distinct. If $\FF_1,\ldots,\FF_t$ do not contain a
cross $t$-matching, then either there exists $j\in[t]$ such that
\[
    \bigcup_{i=1}^t \FF_i
    \subset
    \bigcup_{i\in[t]\setminus\{j\}} \m D_n[(x_i,y_i)],
\]
or
\[
    \left|\bigcup_{i=1}^t \FF_i\right| \le(t-1.01)d_{n,1}.
\]
\end{lemma}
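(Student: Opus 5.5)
We will argue via a spread dichotomy for each $\FF_i$: each family is either large enough to be spread relative to its star, or small. Throughout, put $\m D^i:=\m D_n[(x_i,y_i)]$. We view $\m D^i((x_i,y_i))$ as a family of derangement-like partial permutations on the remaining $n-1$ points. By Lemma~\ref{dspread}, with $S=\{(x_i,y_i)\}$, this family is $\frac n{12}$-spread, and $|\m D^i|=d_{n,1}$ whenever it is nonempty. (If some pair $(x_i,y_i)$ is diagonal, then $\m D^i=\emptyset$; then the first alternative holds trivially with that $j$, so assume all pairs are off-diagonal.)

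\emph{Setup.} Fix a threshold $c=1/100$ and call $i$ \emph{big} if $|\FF_i|\ge c\,d_{n,1}$ and \emph{small} otherwise. For a big $i$, Lemma~\ref{spreadsubfamily} shows that $\FF_i((x_i,y_i))$ is $\frac{cn}{12}$-spread. The target is then the following dichotomy.
\begin{enumerate}[label=(\roman*)]
\item If at least two indices are small, then $|\bigcup\FF_i|\le (t-2)d_{n,1}+2c\,d_{n,1}\le (t-1.01)d_{n,1}$.
\item If exactly one index $j$ is small, we aim to show $\FF_j=\emptyset$, which gives the first alternative.
\item If no index is small, we derive a contradiction by exhibiting a cross $t$-matching.
\end{enumerate}

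\emph{Main step: building a cross matching from spread parts.} Let $Z=\bigcup_i\{(x_i,y_i)\}$, so $|Z|\le t$. For each big $i$, remove from $\FF_i((x_i,y_i))$ all members meeting $Z\setminus\{(x_i,y_i)\}$. By spreadness, at most a $\frac{t}{cn/12}$ fraction is lost, which is below $1/2$ under $t\le n/(2^{17}\log n)$. Call the result $\m G_i$; it is still roughly $\frac{cn}{24}$-spread. Then follow exactly the argument in the proof of Theorem~\ref{spreadapproxtheorem}. Apply Theorem~\ref{spreadtheorem} with $k=n$, $\beta=\log_2(2n)$ and $\delta=1/(8t\log_2(2n))$; the spreadness $\frac{cn}{24}\delta\ge 2^4$ holds with room to spare given $t\le n/(2^{17}\log n)$. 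This shows that a $\frac1{8t}$-random subset of $[n]^2\setminus Z$ contains a member of $\m G_i$ with probability at least $1/2$. Lemma~\ref{lemkee} then yields pairwise disjoint $B_i\in\m G_i$. Since the $B_i$ avoid $Z$, the permutations $B_i\cup\{(x_i,y_i)\}$ are pairwise disjoint, and this is where distinctness of the pairs is used. This settles case (iii).

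\emph{Case (ii), the main obstacle.} The argument above gives disjoint $B_i\in\m G_i$ for all $i\ne j$, but we must also find a matching that includes an arbitrary element $\pi\in\FF_j$. The natural fix is to fix $\pi\in\FF_j$ and shrink each $\m G_i$, $i\ne j$, to the members disjoint from $\pi$. The danger is that the $\FF_i$ could be concentrated on permutations meeting $\pi$, so the spread bound does not directly control how much is lost. Two plans are available.
\begin{enumerate}[label=(\alph*)]
\item First try to choose the big/small threshold $c$ adaptively. If the loss is large, then $|\FF_i|$ restricted to permutations disjoint from $\pi$ is at most $(1-\Omega(1))d_{n,1}$, because double derangements form a positive fraction, by Lemma~\ref{ddspread}. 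Hence $|\FF_i|\le d_{n,1}-\Omega(d_{n,1})$, and together with $|\FF_j|\le c\,d_{n,1}$ this should give the bound $(t-1.01)d_{n,1}$ once constants are tuned.
\item If plan (a) fails, work inside $\m D^i\cap\Sigma_{n,\overline\pi}$, which is spread by Lemma~\ref{ddspread}, replacing $\m D^i$ by this ambient family throughout.
\end{enumerate}
In outline: if some big $\FF_i$ has at least $0.02\,d_{n,1}$ members avoiding $\pi$, or fails to, a counting argument gives either the matching or the size bound. The constants $0.01$ versus $1.01$ have to be balanced carefully here, and that balancing is where we expect the real work to lie.
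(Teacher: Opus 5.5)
Your overall architecture matches the paper's: a size dichotomy, the spread lemma plus Lemma~\ref{lemkee} to build a cross matching when the families are large, a direct count when two families are small, and a fixed $\pi$ from the small family otherwise. However, case (ii) has a genuine gap.

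\textbf{The goal of case (ii) is false.} You aim to show $\FF_j=\emptyset$, but this need not hold. Take $j\neq 1$ with $x_1\neq x_j$, $y_1\neq y_j$ and all pairs off-diagonal. Let $\FF_i=\m D_n[(x_i,y_i)]$ for $i\neq j$, and let $\FF_j=\m D_n[(x_j,y_j)]\cap\m D_n[(x_1,y_1)]$. Every member of $\FF_j$ meets every member of $\FF_1$ at $(x_1,y_1)$, so there is no cross $t$-matching. Exactly one index is small, yet $\FF_j\neq\emptyset$. This is precisely the situation the first alternative of the lemma is designed for.

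\textbf{Why plans (a) and (b) break.} The same example defeats both plans. If $\pi\in\FF_j$ contains some $(x_i,y_i)$ with $i\neq j$, then $\m D_n[(x_i,y_i)]\cap\Sigma_{n,\overline\pi}=\emptyset$. So there is no positive fraction of double derangements in that star to count against, and no spread ambient family to work in.

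\textbf{The missing step.} If every $\pi\in\FF_j$ contains some $(x_i,y_i)$ with $i\neq j$, the first alternative holds. Otherwise, fix $\pi\in\FF_j$ with $\pi(x_i)\neq y_i$ for all $i\neq j$. Only for such $\pi$ does Lemma~\ref{permanent-min-degree} apply: delete row $x_i$ and column $y_i$, and forbid the identity and $\pi$ entries. This gives $|\m D_{n,\overline\pi}[(x_i,y_i)]|>(n-1)!/7.5$. By inclusion--exclusion inside the star, $|\FF_i\cap\Sigma_{n,\overline\pi}|\ge|\FF_i|+(n-1)!/7.5-d_{n,1}$. With this in hand, your plan (a) becomes the paper's Case~2. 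Note that spreadness of $\m D_{n,\overline\pi}$ (Lemma~\ref{ddspread}) only gives upper bounds on subfamilies; the lower bound you need comes from the permanent estimate.

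\textbf{Constants.} Beyond this, case (ii) is left as an unbalanced outline, and the constants you did fix fail elsewhere. With $c=1/100$, $\FF_i((x_i,y_i))$ is only $\frac{n}{1200}$-spread. The condition $\frac{n}{2400}\delta\ge 2^4$ with $\delta=1/(8t\log_2(2n))$ then requires $n\ge 307200\,t\log_2(2n)$. This does not follow from $t\le n/(2^{17}\log n)$, so case (iii) does not hold ``with room to spare'' as set up.

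\textbf{How the paper avoids the balancing.} It sorts $|\FF_1|\le\cdots\le|\FF_t|$ and uses two thresholds measured against $(n-1)!$. This also uses the $n/4$-spreadness of $\Sigma_n$ rather than $n/12$.
\begin{itemize}
\item If $|\FF_1|\ge(n-1)!/10$, all families are $n/40$-spread and a cross matching exists.
\item If $|\FF_1|<(n-1)!/10$ and $|\FF_2|<(n-1)!/4$, the bound follows since $0.35(n-1)!<0.99\,d_{n,1}$.
\item Otherwise $|\FF_i|\ge(n-1)!/4$ for every $i\ge2$. With $\pi\in\FF_1$ chosen as above, $|\FF_i\cap\Sigma_{n,\overline\pi}|\ge(\frac14+\frac1{7.5}-\frac1e)(n-1)!-(n-2)!>0.01(n-1)!$ for all $i\ge 2$ simultaneously. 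The spread argument then yields a cross $(t-1)$-matching disjoint from $\pi$, which together with $\pi$ is a cross $t$-matching.
\end{itemize}
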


\begin{proof}
We may assume that $|\FF_1|\leq \cdots \leq |\FF_t|$. If $\FF_1=\emptyset$,
then the first alternative holds with $j=1$. Thus we may fix
$\sigma\in\FF_1$. We distinguish three cases.

\textbf{Case 1.} Suppose that $|\FF_1|\geq (n-1)!/10$.  Lemma~\ref{spreadsubfamily} with the ambient family being the family of all permutations implies that 
 each $\FF_i$ is  $n/40$-spread.

Put $$\m G_i:=\Big\{\sigma\setminus \{(x_i,y_i)\}: \sigma \in \FF_i, \sigma\cap \bigcup_{j\ne i}\{(x_j,y_j)\} = \emptyset\Big\}$$
We note that by $n/40$-spreadness of $\m F_i$ and our assumption on $t$,
$$|\m G_i|\ge \Big(1-\frac t{n/40}\Big)|\m F_i|\ge \frac 12|\m F_i|.$$
Applying Lemma~\ref{spreadsubfamily} again, we see that $\m G_i$ is $n/80$-spread. Now we can finish the proof as in Theorem~\ref{spreadapproxtheorem}. Consider the up-closures $\m H_i$ of $\m G_i.$  An  application of Theorem~\ref{spreadtheorem} with the same parameters $m,\delta$ gives that for each $i$ and a $1/8t$-random set $W$ we have $\Pr[W\in \m H_i]\ge \frac 12.$ Then we apply Lemma~\ref{lemkee} to conclude that there are pairwise disjoint $B_1\in \m G_1,\ldots, B_t\in \m G_t$. This, in turn, gives us a matching in the family $\m F$, a contradiction. 

\textbf{Case 2.} Suppose that $|\FF_1|<(n-1)!/10$ and
$|\FF_2|\geq (n-1)!/4$. If every $\pi\in\FF_1$ contains one of the pairs
$(x_i,y_i)$, $i\geq 2$, then the first alternative of the lemma holds. Hence we
may choose $\pi\in\FF_1$ such that $\pi(x_i)\neq y_i$ for all $i\geq 2$.

For $i\geq 2$, let $\m G_i$ be the subfamily of $\FF_i$ consisting of
permutations that do not intersect $\pi$. Note that $\m G_i = \m F_i\cap \m D_{n,\bar \pi}[(x_i,y_i)].$ We have a lower bound on the size of $\m D_{n,\bar \pi}[(x_i,y_i)]$ coming from Lemma~\ref{permanent-min-degree}. Thus, by the pigeon-hole principle, for $i\ge 2$ we have
$$|\m G_i|\ge |\m F_i|+|\m D_{n,\bar \pi}[(x_i,y_i)]|-|\m D_n[(x_i,y_i)]| >\Big(\frac 14+\frac 1{7.5}-\frac 1e\Big)(n-1)!-(n-2)!>0.01 (n-1)!,$$
provided $n>1000$ (we note that this inequality is satisfied by our assumption on $n,s$). Thus, $\m G_2,\ldots, \m G_t$ are $n/400$-spread. 
Applying the same spread-lemma argument as in
Case~1 to the families $\m G_i$, $i=2,\ldots,t$, we obtain pairwise disjoint 
$B_i\in\m G_i$, $i=2,\ldots,t$. Together with $\pi$, they form a cross
$t$-matching in $\FF_1,\ldots,\FF_t$, a contradiction.

\textbf{Case 3.} Finally, suppose that $|\FF_1|<(n-1)!/10$ and
$|\FF_2|<(n-1)!/4$. Then
\[
    \left|\bigcup_{i=1}^t \FF_i\right|
    <
    \frac{(n-1)!}{10}
    +
    \frac{(n-1)!}{4}
    +
    (t-2)d_{n,1}
    <
    (t-1.01)d_{n,1},
\]
where the last inequality holds, for instance, for $n\ge 100$ because $1/10+1/4+0.01=0.36<1/e$. 
\end{proof}

We are now ready to prove Theorem~\ref{hm}.

\begin{proofw}
\underline{\emph{Proof of Theorem~\ref{hm}}}.

The family $\Sigma_n$ is $(n/4,n/4)$-spread by Lemma~\ref{sspread}. We apply
Theorem~\ref{spreadapproxtheorem} to $\FF$ with
$q=\lfloor 2\log n\rfloor$. This gives a family $\m S$ and a subfamily
$\FF'\subset\FF$ such that $\FF\setminus\FF'\subset \Sigma_n[\m S]$ and
$|\FF'|\le n!/n^4$.

Suppose first that $\m S$ is non-trivial. By Corollary~\ref{corsimple}, applied
with $\varepsilon=1/2$, we have
$|\Sigma_n[\m S]|\le (s-3/2)(n-1)!$. Hence
\[
    |\FF|
    \leq
    |\Sigma_n[\m S]|+|\FF'|
    \leq
    \left(s-\frac32\right)(n-1)!+\frac{n!}{n^4}
    <
    (s-1)(n-1)!-d_{n,1}+1.
\]
Thus we may assume that $\m S$ is trivial.

Then there exist pairs $(x_1,y_1),\ldots,(x_{s-1},y_{s-1})\in[n]^2$ such that
$\Sigma_n[\m S]\subset \bigcup_{i=1}^{s-1}\Sigma_n[(x_i,y_i)]$. Since
$\tau(\FF)\geq s$, there exists $\sigma\in\FF$ such that
$\sigma(x_i)\neq y_i$ for all $i\in[s-1]$. In fact, this $\sigma$ must belong
to $\FF'$.

Let $\m A$ be the subfamily of $\FF\setminus\FF'$ consisting of permutations
that do not intersect $\sigma$. Then $\nu(\m A)<s-1$ and
$\m A\subset \Sigma_{n,\overline{\sigma}}$. Identifying
$\Sigma_{n,\overline{\sigma}}$ with $\m D_n$, we apply
Lemma~\ref{cross-matching} with $t=s-1$ to the families
$\FF_i:=\m A\cap\Sigma_n[(x_i,y_i)]$.

If the second alternative of Lemma~\ref{cross-matching} holds, then
$|\m A|\le (s-2.01)d_{n,1}$. Hence
\[
    |\FF|
    \leq
    |\m A|
    +
    \left|\bigcup_{i=1}^{s-1}\Sigma_{n,\sigma}[(x_i,y_i)]\right|
    +
    |\FF'|
    \le
    (s-1)(n-1)!-1.01 d_{n,1}+\frac{n!}{n^4}<(s-1)(n-1)!- d_{n,1},
\]
and the theorem follows. Therefore, we may assume that the first alternative
holds. Relabeling the pairs if necessary, we get
$\m A\subset \bigcup_{i=2}^{s-1}\Sigma_{n,\overline{\sigma}}[(x_i,y_i)]$.
Consequently,
\[
    \FF\setminus\FF'
    \subset
    \Sigma_{n,\sigma}[(x_1,y_1)]
    \cup
    \bigcup_{i=2}^{s-1}\Sigma_n[(x_i,y_i)].
\]

We claim that $\sigma$ is the only member of $\FF$ outside
$\bigcup_{i=1}^{s-1}\Sigma_n[(x_i,y_i)]$. Suppose, to the contrary, that there
is another such permutation, say $\pi$. Repeating the previous argument with
$\pi$ in place of $\sigma$, we find some $j\in[s-1]$ such that
\[
    \FF\setminus\FF'
    \subset
    \Sigma_{n,\pi}[(x_j,y_j)]
    \cup
    \bigcup_{\substack{i=1\\ i\neq j}}^{s-1}\Sigma_n[(x_i,y_i)].
\]

Assume first that \(j\neq 1\), say \(j=s-1\). Combining the two inclusions
above, we get
\[
    |\FF\setminus\FF'|
    \leq
    (s-1)(n-1)!-2d_{n,1}+(n-2)!.
\]
Since $|\FF'|\le n!/n^4$, this gives
$|\FF|< (s-1)(n-1)!-d_{n,1}$, and the theorem follows. Thus we may
assume that \(j=1\).
In this case,
\[
    \FF\setminus\FF'
    \subset
    \left(\Sigma_{n,\sigma}[(x_1,y_1)]
    \cap
    \Sigma_{n,\pi}[(x_1,y_1)]\right)
    \cup
    \bigcup_{i=2}^{s-1}\Sigma_n[(x_i,y_i)].
\]
Equivalently, inside the first star we lose all permutations that are disjoint
from at least one of $\sigma$ and $\pi$.

Since $\sigma\neq\pi$, there is $x'\neq x_1$ such that
$\sigma(x')\neq \pi(x')$. Consider permutations $\rho$ satisfying
$\rho(x_1)=y_1$, $\rho(x')=\sigma(x')$, and
$\rho(z)\neq \pi(z)$ for all $z\neq x_1,x'$. These permutations are disjoint
from $\pi$ but intersect $\sigma$, and their number is at least $d_{n-2}$.
Therefore the first star loses at least $d_{n,1}+d_{n-2}$ permutations, and so
\[
    |\FF|
    \leq
    (s-1)(n-1)!-d_{n,1}-d_{n-2}+\frac{n!}{n^4}
    <
    (s-1)(n-1)!-d_{n,1}+1.
\]
This contradiction proves that $\sigma$ is indeed the only member outside the $s-1$ stars.

Hence
\[
    \FF
    \subset
    \{\sigma\}
    \cup
    \Sigma_{n,\sigma}[(x_1,y_1)]
    \cup
    \bigcup_{i=2}^{s-1}\Sigma_n[(x_i,y_i)],
\]
where $\sigma(x_i)\neq y_i$ for all $i\in[s-1]$. It follows immediately that
$|\FF|\le (s-1)(n-1)!-d_{n,1}+1$.

Finally, equality can hold only if the stars are pairwise disjoint:
$\Sigma_{n,\sigma}[(x_i,y_i)]\cap
\Sigma_{n,\sigma}[(x_j,y_j)]=\emptyset$ for all distinct $i,j>1$, and
$\Sigma_{n,\sigma}[(x_1,y_1)]\cap \Sigma_n[(x_i,y_i)]=\emptyset$ for all
$i>1$. This is possible only if either $x_1=\cdots=x_{s-1}$ or
$y_1=\cdots=y_{s-1}$. The theorem follows.
\end{proofw}

\end{document}